\newtheorem{theorem}{Theorem}[section]
\newtheorem{proposition}[theorem]{Proposition}
\newtheorem{lemma}[theorem]{Lemma}
\newtheorem{corollary}[theorem]{Corollary}
\theoremstyle{definition}
\newtheorem{remark}[theorem]{Remark}
\numberwithin{equation}{section}
\DeclareMathOperator*{\spec}{Spec}
\DeclareMathOperator*{\Cov}{Cov}
\DeclareMathOperator*{\image}{Im}
\newcommand{\rank}{\operatorname{rank}}
\newcommand{\Hom}{\operatorname{Hom}}
\newcommand{\PP}{\mathbb{P}}
\newcommand{\FF}{\mathbb{F}}
\newcommand{\cO}{\mathcal{O}}
\newcommand{\scrF}{\mathcal{F}}
\newcommand{\scrE}{\mathcal{E}}
\newcommand{\scrV}{\mathcal{V}}
\newcommand{\scrD}{\mathcal{D}}
\newcommand{\scrR}{\mathcal{R}}
\newcommand{\Vect}{\rm Vect}
\begin{document}

\title[Isomorphism of \'etale fundamental groups lifts]{Isomorphism of \'etale fundamental groups lifts to
isomorphism of stratified fundamental group}

\author[I. Biswas]{Indranil Biswas}

\address{Department of Mathematics, Shiv Nadar University, NH91, Tehsil
Dadri, Greater Noida, Uttar Pradesh 201314, India}

\email{indranil.biswas@snu.edu.in, indranil29@gmail.com}

\author[M. Kumar]{Manish Kumar}

\address{Statistics and Mathematics Unit, Indian Statistical Institute,
Bangalore 560059, India}

\email{manish@isibang.ac.in}

\author[A.J. Parameswaran]{A. J. Parameswaran}

\address{Kerala School of Mathematics, Kunnamangalam PO, Kozhikode, Kerala, 673571, India}

\email{param@ksom.res.in}

\subjclass[2010]{14H30, 14G17, 14H60, 14J60}

\keywords{Genuine ramification, fundamental group, stratified bundles}

\begin{abstract}
It is shown that if a finite generically smooth morphism $f\,:\,Y\,\longrightarrow\, X$ of smooth projective
varieties induces an isomorphism of the \'etale fundamental groups, then the induced map of the stratified
fundamental groups $\pi_1^{str}(f)\, :\, \pi_1^{str}(Y,\, y)\,\longrightarrow\, \pi_1^{str}(X,\, f(y))$
is also an isomorphism.
\end{abstract}

\maketitle

\section{Introduction}

Let $X$ be an irreducible smooth projective variety defined over an algebraically closed field $k$ of
characteristic $p\,>\,0$. Gieseker
conjectured that if the \'etale fundamental $\pi_1^{et}(X,\,x)$ is trivial then the stratified 
fundamental group $\pi_1^{str}(X,\,x)$ must be trivial (see \cite{Gi}). This conjecture was proved by Esnault and
Mehta in \cite{EM}. 
Let $f\,:\,Y\,\longrightarrow\, X$ be a morphism of smooth projective varieties. In \cite{ES}, it was shown that if
the induced homomorphism $\pi_1^{et}(f)\, :\, \pi_1^{et}(Y,\,y)\, \longrightarrow\, \pi_1^{et}(X,\,f(y))$
between the \'etale fundamental groups is trivial, then the induced homomorphism $\pi_1^{str}(f)\, :\,
\pi_1^{str}(Y,\,y)\, \longrightarrow\, \pi_1^{str}(X,\,f(y))$ between the stratified fundamental groups
is also trivial. In \cite{Sun} the proof of these two results were simplified.

We prove another relative version of Gieseker conjecture which can be viewed as a generalization of the Gieseker conjecture. The main result of this article is the following:

\begin{theorem} \label{main}
Let $f\,:\,Y\,\longrightarrow\, X$ be a finite generically smooth morphism of smooth projective varieties over
an algebraically closed field $k$ of characteristic $p\,>\,0$ such that the induced homomorphism $
\pi_1^{et}(f)\, :\,
\pi_1^{et}(Y,\, y)\,\longrightarrow\, \pi_1^{et}(X,\, f(y))$ is an isomorphism.
Then the induced homomorphism $\pi_1^{str}(f)\, :\,\pi_1^{str}(Y)\,\longrightarrow\, \pi_1^{str}(X)$ of
stratified fundamental groups is also an isomorphism.
\end{theorem}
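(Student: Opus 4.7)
The plan is to verify via Tannakian duality that $\pi_1^{str}(f)$ is both faithfully flat (equivalently, a surjection of group schemes) and a closed immersion (equivalently, injective). Writing $\mathrm{Strat}(\cdot)$ for the neutral Tannakian category of stratified bundles, I would reformulate the conclusion as two properties of the pullback functor $f^*:\mathrm{Strat}(X)\to\mathrm{Strat}(Y)$: that $f^*$ is fully faithful and stable under taking stratified subobjects, giving surjectivity of $\pi_1^{str}(f)$; and that every object of $\mathrm{Strat}(Y)$ appears as a subquotient of $f^*V$ for some $V\in\mathrm{Strat}(X)$, giving injectivity. I would keep the Esnault--Mehta theorem in the background throughout, since it tells us that a stratified bundle is trivial precisely when it becomes trivial on some finite \'etale cover.

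For the surjectivity part, take $V\in\mathrm{Strat}(X)$ and a stratified subbundle $W\subseteq f^*V$. Since $f$ is finite and generically \'etale, the trace $\mathrm{tr}:f_*\cO_Y\to\cO_X$ exists and splits the inclusion $\cO_X\hookrightarrow f_*\cO_Y$ over the \'etale locus of $f$. Pushing $W$ forward and applying the trace should produce a stratified subbundle $V'\subseteq V$ whose pullback recovers $W$; the hypothesis that $\pi_1^{et}(f)$ is surjective, namely that every connected \'etale cover of $X$ stays connected after pulling back to $Y$, is what kills the potential Galois-theoretic obstruction to descent once $f$ is replaced by the Galois closure $\widetilde Y\to X$ of its generic extension.

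For the injectivity part, given $E\in\mathrm{Strat}(Y)$, consider $f_*E$. This is a coherent $\cO_X$-module carrying a natural $\scrD_X$-module structure on the \'etale locus $U\subseteq X$, so $(f_*E)|_U$ is already a stratified bundle on $U$. The task reduces to extending this stratified structure across the branch divisor $B=X\setminus U$ to produce a $V\in\mathrm{Strat}(X)$ with $E$ a subquotient of $f^*V$. I would combine the canonical short exact sequences $1\to K_\bullet\to\pi_1^{str}(\bullet)\to\pi_1^{et}(\bullet)\to 1$ for $\bullet\in\{X,Y\}$ with a five-lemma argument: the right-hand map is an isomorphism by hypothesis, so injectivity of $\pi_1^{str}(f)$ reduces to injectivity of the kernel map $K_Y\to K_X$. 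This latter is a ``relative Gieseker'' assertion, which I would attack by iterating the Esnault--Sun theorem after base-changing along successive connected \'etale covers of $X$ (which by hypothesis are in bijection with those of $Y$), so that each intermediate pullback of $E$ can be approximated by pullbacks from $X$.

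The chief obstacle will lie in this last extension step: controlling stratified structures across codimension-one components of $B$, where standard Hartogs-type arguments only produce reflexive sheaves rather than locally free stratified bundles. The full strength of the hypothesis, that $\pi_1^{et}(f)$ is an isomorphism rather than merely surjective or trivial, must be used here to match the infinitesimal data of stratified bundles along each branch component on the two sides of $f$. I expect this to be where the paper does its heaviest technical work, most plausibly via a local analysis around branch components combined with a principle to the effect that any ramification of $f$ invisible to the \'etale fundamental group is also invisible to the stratified fundamental group.
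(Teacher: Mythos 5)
Your Tannakian framing (surjectivity of $\pi_1^{str}(f)$ from full faithfulness plus stability under subobjects, injectivity from every stratified bundle on $Y$ being a subquotient of a pullback) is the right dictionary, but the heart of the theorem is exactly the step you defer, and the mechanism you sketch for it does not work and is not the paper's. Pushing $E$ forward gives a $\scrD_X$-module structure on $(f_*E)|_U$ only over the \'etale locus, and there is no general way to extend a stratified (i.e.\ $\scrD$-module, equivalently $F$-divisible) structure across the branch divisor in characteristic $p$: this is precisely the obstruction the theorem is about, not a technical Hartogs-type issue, and no local analysis along $B$ by itself will produce the descent. Likewise the five-lemma reduction to the kernels $K_Y\to K_X$ followed by ``iterating Esnault--Srinivas along \'etale covers'' is not an argument: Esnault--Srinivas treats the case where $\pi_1^{et}(f)$ is \emph{trivial}, and iterating it over covers gives no handle on arbitrary stratified bundles on $Y$. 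What the paper actually does is prove that $f^*:\Vect^{str}(X)\to\Vect^{str}(Y)$ is an \emph{equivalence}, and the essential surjectivity is established by an arithmetic argument entirely absent from your proposal: first reduce to $k=\overline{\FF}_p$ by spreading out over a finite type $\overline{\FF}_p$-scheme $T$ (this needs Theorem \ref{etale-main}, a specialization statement showing the isomorphism of \'etale fundamental groups persists on closed fibers, proved via specialization maps and purity of the branch locus, together with relative Harder--Narasimhan arguments to descend the conclusion back to the generic fiber); then, over $\overline{\FF}_p$, view the tail $(E_n,\beta_n)$ of the stratified bundle as points of Sun's representation space $\scrR(Y,\eta,P_Y)$, observe that the Verschiebung is a dominant rational self-map of the closure $N$ of this set, invoke Hrushovski's theorem (Lemma \ref{Hrus}) to get a dense set of Verschiebung-periodic points, identify these with \'etale trivializable bundles by Lange--Stuhler, hence with pullbacks from $X$ by the hypothesis on $\pi_1^{et}(f)$, and conclude via the pullback morphism of representation spaces (Proposition \ref{pullback-morphism}) that infinitely many $E_{n_j}$ are pullbacks; full faithfulness then splices these into a stratified bundle on $X$. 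None of this machinery, nor any substitute for it, appears in your plan.

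A secondary gap: your surjectivity/full-faithfulness step is also underpowered as written. The trace splits $\cO_X\hookrightarrow f_*\cO_Y$ only over the \'etale locus (and in characteristic $p$ the trace can even vanish on wildly ramified covers), so ``push forward and apply the trace'' does not by itself descend subobjects or homomorphisms. The paper's actual input is the genuine-ramification estimate $\mu_{\max}\bigl(V\otimes (f_*\cO_Y/\cO_X)\bigr)<\mu(V)$ for semistable $V$ (Lemmas \ref{ffl2}--\ref{ffl4}, generalizing Biswas--Parameswaran from curves), which yields $\Hom_X(V,W)\cong\Hom_Y(f^*V,f^*W)$ for semistable bundles of equal slope and hence full faithfulness on stratified bundles. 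Finally, your paraphrase of Esnault--Mehta (``a stratified bundle is trivial precisely when it becomes trivial on some finite \'etale cover'') is not what that theorem says; what is actually used in the paper is Lange--Stuhler (Frobenius-periodic semistable bundles are \'etale trivializable), which plays a different role.
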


Note that Theorem \ref{main} implies the conjecture of Gieseker. Indeed, if $\pi_1^{et}(X)$ is trivial,
then consider a finite generically smooth morphism $f\,:\, X\,\longrightarrow\, \PP^d$,
where $d\,=\, \dim X$, which can be constructed using Noether normalization;
since $\pi_1^{et}(\PP^d)$ and $\pi_1^{str}(\PP^d)$ are trivial, Theorem \ref{main} implies
that $\pi_1^{str}(X)$ is trivial.

Note that in the earlier results (\cite{EM}, \cite{ES}, \cite{Sun}), one only needs to show that certain
stratified bundles are trivial bundles. In our work, we show that the pullback functor between the category
of stratified bundles on $X$ to that of $Y$ is an equivalence of categories.

The fact that the functor is fully faithful is proved using Proposition \ref{fullyfaithful} (a generalization of 
\cite[Lemma 4.3]{BP}). Note that this only requires the map $f$ to be genuinely ramified. Hence we also get 
$\pi_1^{et}(f)$ is surjective implies $\pi_1^{str}(f)$ is also surjective (see Remark \ref{surjective}).

Other key ingredients in the proof of Theorem \ref{main} are Theorem \ref{etale-main} (which may be of independent 
interest) Lemma \ref{Hrus} (a result of Hrushovski) and the theory of representation spaces developed by Simpson 
(\cite{Simpson}) and its generalization to positive characteristics (\cite{Sun}).

\section{Stratified vector bundles}

Let $k$ be an algebraically closed field of characteristic $p$, with $p\,>\, 0$. Let $X$ be an irreducible
smooth projective variety over $k$. Denote by ${\mathcal D}_X$ the sheaf of differential operators,
in the sense of Grothendieck, on $X$ \cite{Gr}, \cite{BO}. A stratified vector bundle on $X$ is an
${\mathcal O}_X$--coherent ${\mathcal D}_X$--module \cite{Gi}.

Let $F_X\, :\, X\, \longrightarrow\, X$ be the absolute Frobenius morphism for $X$. So for any vector bundle $V$ on
$X$, the pullback $F^*_X V\, \longrightarrow\, X$ is the subbundle of $V^{\otimes p}$ defined by
$\{v^{\otimes p}\, \in\, V^{\otimes p}\,\,\big\vert\,\, v\, \in\, V\}$. A $F$--divisible vector bundle
on $X$ is a sequence of vector bundles $\{E_i\}_{i\geq 0}$ on $X$ indexed by the nonnegative integers together with an
isomorphism $E_i\, \longrightarrow\, F^*_X E_{i+1}$ for every $i\, \geq\, 0$ \cite{Gi}, \cite{Sa}.
There is a natural equivalence of categories between the stratified vector bundles and the $F$--divisible
vector bundles. The underlying vector bundle for the stratified bundle corresponding to a $F$--divisible
vector bundle $\{E_i\}_{i\geq 0}$ is $E_0$ \cite{Gi}, \cite{Sa}. Similarly, the rank of $\{E_i\}_{i\geq 0}$ is
the common rank of $E_i$.

Let $\Vect^{str}(X)$ be the category of $F$--divisible vector bundles on $X$, which, as mentioned above, is 
equivalent to the category of stratified vector bundles on $X$. Henceforth, by a stratified bundle we will mean a 
$F$--divisible vector bundle. The category of finite dimensional $k$--vector spaces will be denoted by $\Vect(k)$. 
Fix a closed point $x$ of $X$. We have the fiber functor
\begin{equation}\label{e1}
\omega_x\,:\, {\rm Vect}^{str}(X)\, \longrightarrow\, \Vect(k),\ \ \{E_i\}_{i\geq 0}\, \longmapsto\,
(E_0)_x.
\end{equation}
Then the pair $(\Vect^{str}(X),\, \omega_x)$ forms a neutral Tannakian category.
Its Tannaka dual is the stratified fundamental group $\pi_1^{str}(X,\, x)$ \cite{Sa} \cite{Gi}
(see \cite{SR}, \cite{DM}, \cite{No1}, \cite{No2} for Tannaka dual).

Let $\Vect^{et}(X)$ denote the category of \'etale trivializable vector bundles on $X$. Note that any
\'etale trivial vector bundle on $X$ gives rise to a stratified bundle and this induces a fully faithful
functor from $\Vect^{et}(X)$ to $\Vect^{str}(X)$. This functor Induces an epimorphism $\pi_1^{str}(X,\,x)
\,\longrightarrow\, \pi_1^{et}(X,\,x)$.

Let $f\,:\,Y\,\longrightarrow\, X$ be a generically smooth morphism of irreducible smooth projective
varieties over $k$. Fix a closed point $y$ of $Y$, and set $x\,=\,f(y)$. Then the pullback functor 
$$f^*\,:\,({\Vect}^{str}(X),\,\omega_x) \,\longrightarrow\, ({\Vect}^{str}(Y),\,\omega_y)$$
is a functor of Tannakian categories, and it induces a homomorphism of group schemes
$$\pi_1^{et}(f)\, :\, \pi_1^{str}(Y,\, y)\,\longrightarrow\, \pi_1^{str}(X,\, x)$$
between the corresponding Tannaka duals.

Let $\cO_{X}(1)$ be a fixed very ample line bundle $X$. For a torsionfree sheaf $V$ of $X$, $$P(V,m)\ :=\ \chi(V(m))$$ 
is a polynomial in $m$ which is called the Hilbert polynomial of $V$. We say that $V$ is semistable if for
any nonzero subsheaf $W\,\subset\, V$, the inequality
$$\frac{P(W,m)}{\rank(W)}\ \le\ \frac{P(V,m)}{\rank(V)} 
$$
holds for all $m$ sufficiently large.

Let $\scrE\,=\,(E_n)_{n\ge 0}$ be a stratified vector bundle on $X$ of rank $r$. For all $n\, \geq\, 0$,
the Hilbert polynomial of $E_n$ is same as that of the trivial vector bundle $\cO_X^{\oplus r}$, and
there exists an integer $n_0\,\ge\, 1$ (which depends on
$\scrE$) such that for all $j\,\ge\, n_0$, the vector bundle $E_j$ is 
semistable. This is because if $W$ is a subsheaf of $E_n$, then $(F_X^n)^* W$ is subsheaf of
$(F_X^n)^* E_n\,=\, E_0$; on the other hand, we have $c_i((F_X^n)^* W)\,=\, p^{ni} c_i(W)$. Now from the
boundedness of the destabilizing subsheaves of $E_0$ it follows that $E_n$ is semistable for sufficiently
large $n$. This also shows that $c_i(E_j)\,=\, 0$ for all $i\, \geq\, 1$ and all $j\, \geq\, 0$.

Let $X$ be a smooth irreducible projective variety over $k$. Fix a closed
point $$\xi\,:\,\spec(k)\,\longrightarrow\, X$$ of $X$. Recall from \cite{Sun} that a representation space
$\scrR(X,\,\xi,P)$ parametrizes all pairs $(V,\, \beta)$ 
where $V$ is a semistable vector bundle with Hilbert polynomial $P$ and $\beta\,:\,\xi^*V\,\longrightarrow\, 
\cO_{\spec(k)}^{\oplus r}$ is an isomorphism. This was constructed by Simpson in characteristic zero and it was 
extended to positive characteristics by Sun. In particular, in \cite[Theorem 2.3]{Sun} it was shown that 
$\scrR(X,\xi,P)$ is in fact a fine moduli space.

\begin{proposition}\label{pullback-morphism}
Let $f\,:\,Y\,\longrightarrow\, X$ be a finite generically smooth morphism. Let $\zeta$ be a closed point
in $Y$, and $\xi \,=\, f(\zeta)$. Then $f$ induces a morphism $\Phi\,:\,\scrR(X,\xi,P_X)\,\longrightarrow\,
\scrR(Y,\zeta,P_Y)$ defined by $(V,\,\beta)\,\longmapsto\, (f^*V,\, f^*\beta\big\vert_{\zeta})$,
where $P_X$ and $P_Y$ are the Hilbert polynomials of $\cO_X^{\oplus r}$ and $\cO_Y^{\oplus r}$ respectively.
\end{proposition}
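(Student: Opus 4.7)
The plan is to proceed in two stages: first verify that pullback yields a well-defined map on $k$-points, and then promote this to a morphism of schemes via the fine moduli structure of $\scrR(Y, \zeta, P_Y)$ from \cite[Theorem 2.3]{Sun}.

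For the point-level statement, I would first check that for any $(V, \beta) \in \scrR(X, \xi, P_X)(k)$ the pullback $f^*V$ lies in $\scrR(Y, \zeta, P_Y)$. The Hilbert polynomial identity $P(f^*V, m) = P_Y(m)$ follows from the projection formula $\chi(f^*V(m)) = \chi(V(m) \otimes f_*\cO_Y)$ applied both to $V$ and to $\cO_X^{\oplus r}$, using the hypothesis $P(V, m) = P_X(m)$ to identify the two Chern characters after integration against the relevant classes from $X$ (with a compatible choice of $\cO_Y(1)$ relative to $f^*\cO_X(1)$). Semistability of $f^*V$ is the classical preservation of semistability under finite surjective pullbacks between smooth projective varieties: the standard argument passes to a Galois closure of $f$---whose existence is supplied by the generic smoothness, hence separability, of $f$---and descends the Harder--Narasimhan filtration of $f^*V$ by Galois invariance, contradicting semistability of $V$ unless $f^*V$ is itself semistable. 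Compatibility of trivializations is automatic: from $f \circ \zeta = \xi$ one has $\zeta^*(f^*V) = \xi^*V$ canonically, so $\beta$ itself serves as the trivialization of $f^*V$ at $\zeta$.

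To pass from $k$-points to a scheme morphism, I would invoke the universal family $(\scrV, \widetilde\beta)$ on $\scrR(X, \xi, P_X) \times X$ provided by the fine moduli property, and pull it back along $\mathrm{id}_{\scrR(X,\xi,P_X)} \times f$ to produce the family $(\mathrm{id} \times f)^*\scrV$ on $\scrR(X, \xi, P_X) \times Y$. Flatness of $f$---guaranteed by miracle flatness, since $f$ is a finite morphism between smooth varieties of equal dimension---ensures this pulled-back family is flat over the base $\scrR(X, \xi, P_X)$; by the previous paragraph each closed fibre is a semistable bundle with Hilbert polynomial $P_Y$ equipped with a trivialization at $\zeta$. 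The universal property of $\scrR(Y, \zeta, P_Y)$ then supplies the classifying morphism $\Phi$, and its action on $k$-points matches the stated formula $(V, \beta) \mapsto (f^*V, f^*\beta|_\zeta)$ by construction.

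The main obstacle, I expect, is the preservation of semistability under $f^*$. Although this is a classical assertion, its proof in positive characteristic genuinely relies on the generic smoothness hypothesis to supply the Galois closure (or, equivalently, a nondegenerate trace map) used to descend Harder--Narasimhan filtrations. The Hilbert polynomial and trivialization-compatibility parts of the argument are essentially formal, and the final passage to a scheme morphism is standard moduli-theoretic bookkeeping once semistability of the pulled-back family has been secured.
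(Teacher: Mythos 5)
Your proposal follows essentially the same route as the paper: pull back the universal family $(\scrV,\beta_{\scrR})$ along $\mathrm{id}\times f$ and invoke the fine moduli (universal) property of $\scrR(Y,\zeta,P_Y)$ from \cite[Theorem 2.3]{Sun} to obtain $\Phi$. The only difference is that you spell out the point-level facts (preservation of semistability via Galois closure and the Hilbert polynomial computation via the projection formula) which the paper simply asserts in one sentence, so this is a correct proof in the same spirit.
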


\begin{proof}
Since $f$ is finite generically smooth morphism, for a semistable vector bundle $V$ on $X$ with Hilbert polynomial 
$P_X$, the pullback $f^*V$ is a semistable vector bundle of Hilbert polynomial $P_Y$ on $Y$. Denote 
$\scrR\,=\,\scrR(X,\xi,P)$, and let $f_{\scrR}$ be the base change of $f$ to $\spec(\scrR)$, so
$$
f_{\scrR}\,=\, f\times {\rm Id}_{\spec(\scrR)}\, :\, Y\times \spec(\scrR)\, \longrightarrow\,
X\times \spec(\scrR).
$$
Let $(\scrV,\,\beta_{\scrR})\, \longrightarrow\, X\times \scrR$ be the universal vector bundle.
Then $f_{\scrR}^*\scrV$ is a semistable vector bundle on $Y_{\scrR}\times \spec(\scrR)$. The
pair $(f_{\scrR}^*\scrV,\, f_{\scrR}^*\beta_{\scrR}\big\vert_{\zeta\times\scrR})$ is
a family of semistable vector bundles on $Y$ together with 
an isomorphism of the fiber over $\zeta$ with $k^{\oplus r}$. Hence by universal property of moduli spaces
there is a morphism $\Phi\,:\,\scrR\,\longrightarrow\, \scrR(Y,\zeta, P_Y)$ as in the statement of
the proposition.
\end{proof}

The following is a consequence of \cite[Corollary 1.2]{Hru} (also see \cite[Corollary 0.4]{Var}).

\begin{lemma} \label{Hrus}
Take an irreducible variety defined $Z$ over $\overline \FF_p$, and let $\Psi\,:\,Z\,\longrightarrow\, Z$ be
a rational dominant map. Then the subset $$S\, :=\, \{z\,\in\, Z\,\,
\big\vert\,\, \Psi^n(z)\,=\,z\,\, \text{ for some }\,\, n \}\, \subset\, Z$$ is dense in $Z$.
\end{lemma}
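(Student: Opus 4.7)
The plan is to deduce the density of the periodic set $S$ from Hrushovski's theorem on twisted Frobenius fixed points of correspondences. First I would spread the data out over a finite field: choose $q = p^a$ large enough so that $Z$ is defined over $\FF_q$ and so is the closure $\Gamma \subset Z \times Z$ of the graph of $\Psi$ on its domain of definition. Since $\Psi$ is a rational dominant self-map, $\Gamma$ is an irreducible closed subvariety of $Z \times Z$ for which both projections $p_1, p_2 : \Gamma \to Z$ are dominant. This places us in the setting of a dominant self-correspondence of $Z$ defined over $\FF_q$.

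Next I would invoke Hrushovski's result (in the form of \cite[Corollary 1.2]{Hru}, or equivalently \cite[Corollary 0.4]{Var}) applied to the correspondence $\Gamma$. The relevant conclusion is that the set of closed points $x \in Z(\overline{\FF}_p)$ for which there exists $m \geq 1$ with $(x,\, \mathrm{Frob}_q^m(x)) \in \Gamma$ is Zariski dense in $Z$. Denote this dense set by $T$. Translating the condition $(x, \mathrm{Frob}_q^m(x)) \in \Gamma$, for $x$ in the domain of $\Psi$ we have $\Psi(x) = \mathrm{Frob}_q^m(x)$, and by removing a nowhere-dense locus we may assume $T$ lies in the domain of all iterates $\Psi^k$.

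Finally, I would promote this Frobenius-twisted fixed-point relation to a genuine periodicity statement. For $x \in T$, the point $x$ is defined over some $\FF_{q^N}$; since $\Psi$ is defined over $\FF_q$, it commutes with $\mathrm{Frob}_q$, so iterating the relation $\Psi(x) = \mathrm{Frob}_q^m(x)$ yields $\Psi^k(x) = \mathrm{Frob}_q^{km}(x)$ for all $k \geq 1$. Taking $k = N$ (so that $\mathrm{Frob}_q^{Nm}$ fixes the $\FF_{q^N}$-point $x$) gives $\Psi^N(x) = x$, hence $x \in S$. Thus $T \subset S$, and density of $T$ forces density of $S$.

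The main obstacle in this plan is really just packaging the input to Hrushovski correctly: one must take the closure of the graph of the rational map rather than the graph itself, and verify that both projections are dominant so that the hypotheses of \cite[Corollary 1.2]{Hru} are met. Once that bookkeeping is done, the deduction of periodicity from twisted periodicity is formal via the commutation of $\Psi$ with Frobenius.
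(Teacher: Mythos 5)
Your argument is correct and is essentially the deduction the paper intends: the paper gives no proof beyond citing \cite[Corollary 1.2]{Hru} (equivalently \cite[Corollary 0.4]{Var}), and your spreading out over $\FF_q$, replacing $\Psi$ by the closure of its graph as a dominant self-correspondence, applying Hrushovski to get a dense set of points with $\Psi(x)=\mathrm{Frob}_q^m(x)$, and then untwisting via the commutation of $\Psi$ with $\mathrm{Frob}_q$ is exactly the standard way to extract the lemma from that citation. One small tightening: instead of ``removing a nowhere-dense locus'' so that $T$ lies in the domain of \emph{all} iterates (that is a countable family of conditions, not a single nowhere-dense set, and over $\overline{\FF}_p$ such a union can exhaust the closed points), just observe that the domain of definition $U$ of $\Psi$ is defined over $\FF_q$ and hence Frobenius-stable, so $x\in U$ together with $\Psi(x)=\mathrm{Frob}_q^m(x)$ already guarantees by induction that every iterate $\Psi^k(x)=\mathrm{Frob}_q^{km}(x)$ is defined.
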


A finite generically smooth map $f\,:\,Y\,\longrightarrow\, X$ of two irreducible
projective varieties of the same dimension
is called \textit{genuinely ramified} if the induced homomorphism of \'etale fundamental groups
$$\pi^{et}(f)\,:\,\pi_1^{et}(Y,\, y)\,\longrightarrow\, \pi_1^{et}(X, \, f(y))$$
is surjective.

The following proposition is proved in Section \ref{ffp}.

\begin{proposition}\label{fullyfaithful}
Let $f\,:\,Y\,\longrightarrow\, X$ be a genuinely ramified map between irreducible smooth projective
varieties. Let $V$ and $W$ be semistable vector bundles on $X$ of same slope. Then the natural
map $$\Hom_X(V,\,W)\,\longrightarrow\, \Hom_Y(f^*V,\,f^*W)$$
is an isomorphism. 
\end{proposition}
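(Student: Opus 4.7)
My plan is to reduce surjectivity of $f^*$ on $\Hom$-spaces to a cohomological vanishing via the projection formula, and then to exploit the genuinely ramified hypothesis through a slope and descent argument.

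Injectivity is immediate: since $f$ is finite, hence surjective, any $\psi\,:\,V\,\longrightarrow\, W$ with $f^*\psi\,=\,0$ must vanish on the dense image of $f$, hence identically because $W$ is torsion-free. Thus the substantive content is surjectivity.

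For surjectivity, I would consider the short exact sequence
$$
0 \,\longrightarrow\, \cO_X \,\longrightarrow\, f_*\cO_Y \,\longrightarrow\, M \,\longrightarrow\, 0,
$$
where the first map is $1\,\mapsto\, 1$ and $M$ is the cokernel. Tensoring with the locally free sheaf $V^*\otimes W$ preserves exactness, and the projection formula identifies $f_*f^*(V^*\otimes W)$ with $(V^*\otimes W)\otimes f_*\cO_Y$. Taking global sections gives the exact sequence
$$
0 \,\longrightarrow\, \Hom_X(V,\,W) \,\longrightarrow\, \Hom_Y(f^*V,\,f^*W) \,\longrightarrow\, \Hom_X(V,\, W\otimes M),
$$
so it suffices to prove the vanishing $\Hom_X(V,\, W\otimes M) \,=\, 0$.

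Suppose for contradiction that a nonzero homomorphism $\psi\,:\,V\,\longrightarrow\, W\otimes M$ exists. Since $V$ is semistable of slope $\mu$, its image in $W\otimes M$ (modulo torsion) has slope at least $\mu$, so the maximal slope appearing in the Harder--Narasimhan filtration of the torsion-free quotient of $W\otimes M$ is at least $\mu$. The genuine ramification of $f$ should now be used to bound the slopes of $M$ from above: intuitively, a subquotient of $M$ of slope $\geq\, 0$ would, by a descent argument, produce a nontrivial \'etale intermediate cover of $f$, contradicting genuine ramification. Combined with the semistability of $W$, this forces all slopes of $W\otimes M$ to lie strictly below $\mu$, which contradicts the existence of $\psi$.

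The principal obstacle is making the last step rigorous in positive characteristic and higher dimension: tensor products of semistable sheaves need not be semistable in characteristic $p$, and maximal slopes can behave unpredictably under Frobenius pullback. I anticipate that one either works directly with a putative slope-$\mu$ subsheaf of a suitable pullback and descends it along $f$ to produce the forbidden \'etale subcover, or passes to Galois closures and applies equivariant descent of the maximal-slope socle. This descent step is the essential generalization of \cite[Lemma 4.3]{BP} from the curve setting to arbitrary smooth projective varieties.
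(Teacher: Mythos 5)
Your reduction is exactly the paper's: injectivity is easy, and via the projection formula and the exact sequence obtained from $0 \to \cO_X \to f_*\cO_Y \to M \to 0$ (with $M = (f_*\cO_Y)/\cO_X$), surjectivity comes down to the vanishing $\Hom_X(V,\, W\otimes M) = 0$. But at precisely the point where the real content begins, you stop: you assert that genuine ramification ``should'' bound the slopes of $W\otimes M$ strictly below $\mu(W)$, and you yourself flag the obstruction (tensor products of semistable sheaves need not be semistable in characteristic $p$, so even knowing $\mu_{\max}(M) < 0$ does not give $\mu_{\max}(W\otimes M) < \mu(W)$) without resolving it. The suggested alternative --- that a slope $\geq 0$ subquotient of $M$ would descend to an \'etale intermediate cover --- is a plausible heuristic for why $\mu_{\max}(M)<0$, but it does not address the tensor-product problem, which is the actual difficulty; so the proposal has a genuine gap rather than a complete argument.

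What the paper does to close this gap is the content of its Lemmas \ref{ffl2}--\ref{ffl4}. After passing to a Galois closure, genuine ramification forces $Y\times_X Y$ to be connected; a combinatorial ordering of the Galois group $\Gamma = \{\gamma_1,\dots,\gamma_d\}$ (Lemma \ref{ffl2}) pairs each component $Y_{\gamma_j}$ of $Y\times_X Y$ with an earlier one meeting it, and purity of the branch locus guarantees these intersections are divisors. Using the splitting of $(\widetilde\pi_1)_*\cO_{\widetilde{Y\times_X Y}} \cong \cO_Y\otimes_k k[\Gamma]$ and difference maps $f_j \mapsto f_j - f_{\eta(j)}$, one embeds $f^*M$ into a direct sum of line bundles $\mathcal L_j = \cO_Y(-D^0_j) \subsetneq \cO_Y$ of negative degree (Lemma \ref{ffl3}). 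Because twisting by a line bundle preserves semistability even in characteristic $p$, this embedding yields $\mu_{\max}(W\otimes M) < \mu(W)$ (Lemma \ref{ffl4}), and then the Harder--Narasimhan filtration of $W\otimes M$ gives the vanishing you need. This geometric construction --- not a descent of a maximal-slope socle --- is the essential generalization of \cite[Lemma 4.3]{BP} to higher dimension, and it is the piece your proposal is missing.
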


We will assume Proposition \ref{fullyfaithful} and defer its proof to Section \ref{ffp}.

\section{The case of $k=\overline{\FF}_p$}
In this section we prove the main theorem when $k\,=\,\overline{\FF}_p$.
\begin{theorem}\label{main-basecase}
Let $f\,:\,Y\,\longrightarrow\, X$ be a genuinely ramified map of smooth projective varieties
defined over $\overline{\FF}_p$. If the induced homomorphism $\pi^{et}(f)\,:\,\Vect^{et}(X)\,\longrightarrow\,
\Vect^{et}(Y)$ is essentially surjective, then so is the induced homomorphism $\pi_1^{str}(f)\, :\,\Vect^{str}(X)
\,\longrightarrow\, \Vect^{str}(Y)$.
\end{theorem}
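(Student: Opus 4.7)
We aim to show essential surjectivity of $f^*\colon \Vect^{str}(X) \to \Vect^{str}(Y)$. Let $\scrE = (E_i)_{i \ge 0}$ be a stratified bundle on $Y$ of rank $r$; after shifting the sequence, we may assume each $E_i$ is semistable with trivial Chern classes. Choosing compatible framings of the fibres $(E_i)_\zeta$ yields a sequence of points $[E_i] \in \scrR_Y := \scrR(Y,\zeta,P_Y)$ satisfying $\phi_Y([E_{i+1}]) = [E_i]$, where $\phi_Y$ denotes the (partially defined) self-map on $\scrR_Y$ induced by pullback along the absolute Frobenius $F_Y$ (well defined at every point of our sequence). Let $Z \subset \scrR_Y$ be the reduced closure of $\{[E_i]\}_{i \ge 0}$. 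Since $\phi_Y$ is defined on the dense subset $\{[E_i]\}$ of $Z$ and maps it into $Z$, it extends by continuity to a dominant self-morphism $\phi_Y\colon Z \to Z$. Decompose $Z = \bigcup_\alpha Z_\alpha$ into irreducible components, and replace $\phi_Y$ by a power $\phi_Y^N$ stabilising each $Z_\alpha$; the restriction $\phi_Y^N|_{Z_\alpha}$ remains dominant.

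By Lemma~\ref{Hrus} applied to each $\phi_Y^N|_{Z_\alpha}$, the set of $\phi_Y^N$-periodic points is dense in $Z_\alpha$. A periodic point corresponds to a semistable bundle $U$ on $Y$ with $(F_Y^m)^*U \cong U$ for some $m$, which by the Lange--Stuhler theorem is \'etale-trivializable. The hypothesis that $f^*\colon \Vect^{et}(X) \to \Vect^{et}(Y)$ is essentially surjective then gives $U \cong f^*V$ for some \'etale-trivializable $V$ on $X$; transporting the framing of $U$ to $V$ through the identification $(f^*V)_\zeta = V_\xi$ produces $[V] \in \scrR_X$ with $\Phi([V]) = [U]$. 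Hence every $\phi_Y^N$-periodic point of $Z_\alpha$ lies in the image of $\Phi$ from Proposition~\ref{pullback-morphism}. By Chevalley's theorem, $\Phi(\scrR_X)$ is constructible; as it contains a dense subset of each $Z_\alpha$, it contains a dense open subset of each $Z_\alpha$. Since $\{[E_i]\}$ is dense in every $Z_\alpha$, it meets these open sets in infinite sets; in particular, for arbitrarily large $i$ there exists $[V_i] \in \scrR_X$ with $\Phi([V_i]) = [E_i]$.

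To assemble the lift, for each $j \ge 0$ pick $i > j$ with $[E_i] = \Phi([V_i])$ and set $V_j := (F_X^{i-j})^*V_i$. The identity $f^* \circ (F_X^{i-j})^* = (F_Y^{i-j})^* \circ f^*$ then gives $f^*V_j \cong E_j$; independence from the choice of $i$ follows from the injectivity of $\Phi$ on closed points, which is a direct consequence of Proposition~\ref{fullyfaithful}. The resulting sequence $(V_j)_{j \ge 0}$, together with the natural isomorphisms $F_X^*V_{j+1} \cong V_j$, is a stratified vector bundle on $X$ whose pullback along $f$ is isomorphic to $\scrE$. The main difficulties in carrying out this plan will be (a) pinning down the Frobenius self-morphism $\phi_Y$ on $\scrR_Y$, since Frobenius pullback need not preserve semistability globally (though it does along our orbit, which is all we need), and (b) the careful tracking of framings, both when identifying periodic points with framed \'etale-trivializable bundles and when matching framings under $\Phi$; the intertwining identity $\Phi \circ \phi_X = \phi_Y \circ \Phi$ between the pullback morphism and the Frobenius self-maps is the essential geometric input that makes the lifting compatibility work.
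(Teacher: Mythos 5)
Your proposal follows essentially the same route as the paper's proof: compatible framings give points $(E_n,\beta_n)$ in Sun's representation space, the Frobenius/Verschiebung self-map acts on the closure of this orbit, Hrushovski's Lemma \ref{Hrus} (after passing to a power stabilizing the irreducible components) yields a dense set of periodic points which by Lange--Stuhler are \'etale-trivializable and hence lie in the image of $\Phi$ from Proposition \ref{pullback-morphism}, a subsequence of the $E_n$ therefore descends to $X$, and the Frobenius isomorphisms descend via Proposition \ref{fullyfaithful}. The one point to tighten is exactly your flagged difficulty (a): the self-map does not ``extend by continuity'' from the dense orbit to a morphism of its closure, but is obtained, as in the paper, by restricting the Verschiebung rational map that Sun constructs on the whole representation space (\cite[Proposition 2.5]{Sun}), whose domain of definition contains the orbit and whose image contains it as well, so the restriction is a dominant rational self-map of the closure.
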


\begin{proof}
Let $\scrE\,=\,(E_n)_{n\ge 0}$ be a stratified vector bundle on $Y$ of rank $r$. As noted before,
there exists an integer $m$ such that for all
$n\,\ge\, m$, the vector bundle $E_n$ is semistable of rank $r$, and also $c_i(E_j)\,=\, 0$
for all $i\, \geq\, 1$ and $j\, \geq\, 0$.
Let $\eta\,\in\,Y$ be a closed point. For every $n \,>\, m$,
by \cite[Lemma 3.3]{Sun} there is an isomorphism $$\beta_n\,:\,\eta^*E_n\,\longrightarrow\, \cO_{\spec(k)}^{\oplus r}$$
such that $F^*_Y\beta_{n+1}\,=\,\beta_n$.
Let $N$ be the closure of
\begin{equation}\label{k1}
T\,:=\, \{(E_n,\,\beta_n)\,\,\big\vert\,\, n \,>\, m \}
\end{equation}
in the representation space $\scrR(Y,\eta, P_Y)$ parametrizing the isomorphism classes of
pairs $(V,\,\beta)$, where $V$ is a vector bundle on $Y$ with Hilbert polynomial $P_Y$ while
$\beta\,:\,\eta^*V\,\longrightarrow\, \cO_{\spec(k)}^{\oplus r}$ is an isomorphism. From
\cite[Theorem 2.3 (1)]{Sun} it follows that $(E_n,\,\beta_n)$ is a point of $\scrR(Y,\eta, P_Y)$ for
every $n \,>\, m$.
 
In \cite[Proposition 2.5]{Sun} it is shown that the analog of the rational map Verschiebung exists on
$\scrR(Y,\eta, P_Y)$. Let
\begin{equation}\label{k2}
{\mathcal V}\,:\,N\,\longrightarrow\, N
\end{equation}
be the restriction of this Verschiebung. Note that the image of
$\mathcal V$ contains $T$ (see \eqref{k1}),
and hence ${\mathcal V}\,:\,N\,\longrightarrow\, N$ is a dominant rational map.

Let $N_1,\,\cdots,\, N_\ell$ be the positive dimensional irreducible components of $N$. The generic point
of $N_j$, $1\,\leq\, j\, \leq\, \ell$, will be denoted by $\eta_j$. Since the map ${\mathcal V}$ in
\eqref{k2} is dominant, it permutes $\eta_1,\,\cdots,\,\eta_\ell$. Hence there exists $a\,\ge \,1$ such that
${\mathcal V}^a$ fixes $\eta_i$ for all $1\,\le\, i \,\le\, \ell$.

Recall that each $N_i$ is irreducible. Hence by Lemma \ref{Hrus}, the set
$$S_i\,=\,\{(E,\,\beta)\,\in\, N_i\,\, \big\vert\,\, {\mathcal V}^b((E,\,\beta))\,=\,(E,\,\beta)\, \text{ for some }\,
b \,\ge\, 1\}$$ is dense in $N_i$. Consequently,
$$S\,=\,\{(E,\,\beta)\,\in\, N\,\,\big\vert\,\, {\mathcal V}^b((E,\,\beta))\,=\,(E,\,\beta)\, \text{ for some }
\, b \,\ge\, 1\}$$
is dense in $N$.
 
For any $(E,\,\beta)\,\in\, S$, we have $E$ to be a semistable vector bundle of rank $r$ on $Y$ such
that $(F^j_Y)^*E\,=\, E$ for some $j\, \geq\, 1$. Hence a theorem of
Lange and Stulher says that $E$ is an \'etale trivial vector bundle on $Y$ \cite{LS}. Since
$f^*\,:\,\Vect^{et}(X)\,\longrightarrow\, \Vect^{et}(Y)$ is essentially surjective,
$$E\ =\ f^*E'$$ for some \'etale trivial vector
bundle $E'$ on $X$. Moreover $E'$ is a semistable vector bundle of rank $r$
whose Hilbert polynomial is the Hilbert polynomial of $\cO_{X}^{\oplus r}$.

Consider the morphism
$$\Phi\,:\,\scrR(X,f(\eta),P_X)\,\longrightarrow\, \scrR(Y,\eta,P_Y)$$
defined by the pullback using $f\,:\,Y\,\longrightarrow\, X$ as in Proposition \ref{pullback-morphism}.
Let $N'\,=\,\Phi^{-1}(N)$. Note that the restriction $\Phi\big\vert_{N'}\,:\,N'\,\longrightarrow\, N$ is
dominant, because $S$ is in $\image(\Phi)$ and $S$ is dense in $N$.
Consequently, there exists an open dense subset $U$ of $N$ contained in $\image(\Phi)$.
As $T$ is dense in $N$, there exists a subsequence $\{n_j\,\, \big\vert\,\, n_j \,>\, m\}$ such that
$\{(E_{n_j},\,\alpha_j)\,\, \big\vert\,\, j\,\ge\, 1\}\,\subset\, U$.
Consequently, there are semistable vector bundles $E'_{n_j}$ on $X$ such that $f^*E'_{n_j}\,=\,E_{n_j}$.
We have $f\circ F_Y\,=\, F_X\circ f$, and therefore, $$E_{n_j-1}\,=\,F^*_YE_{n_j}\,=\,F^*_Yf^*E'_{n_j}
\,=\,f^*F^*_XE'_{n_j}.$$
Set $n_0\,=\,0$, and define $E'_{i}\,:=\,(F^{n_j-i}_X)^*E'_{n_j}$, where
$j\,\ge \,0$ and $n_{j-1}\,<\, i \,<\, n_j$.
Then for all $n$, we have $E_n\,=\,f^*E'_n$ for some vector bundles $E'_n$ on $X$. 
Also for $j\,\ge\, 0$, $$f^*F^*_XE'_{n_j+1} \,\cong\, f^*E'_{n_j}.$$ But $f$ is genuinely ramified and for
$j\,\ge\, 1$, the vector bundle $E'_{n_j}$ is semistable. Consequently, we have $F^*_X E'_{n_j+1}\,\cong\,
E'_{n_j}$ by Proposition \ref{fullyfaithful}. Therefore, $\scrE'\,=\,(E'_{n_j})_{j\ge 0}$ is a stratified bundle
on $X$ and it satisfies the condition $f^*\scrE'\,=\,\scrE$.
\end{proof}

\begin{corollary}\label{basecase}
Let $f\,:\,Y\,\longrightarrow\, X$ be a finite generically smooth morphism of smooth projective varieties
over $\overline{\FF}_p$. Let $y\,\in\, Y$ be a closed point, and $x\,=\,f(y)$. It is given that the induced homomorphism
$\pi^{et}(f)\, :\, \pi_1^{et}(Y,\,y)\,\longrightarrow\, \pi_1^{et}(X,\,x)$ is an isomorphism. Then the induced
homomorphism $\pi^{str}(f)\, :\, \pi_1^{str}(Y,\,y)\,\longrightarrow\, \pi_1^{str}(X,\,x)$ is also an isomorphism.
\end{corollary}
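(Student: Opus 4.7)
The plan is to reduce the corollary to the two main inputs, Theorem~\ref{main-basecase} and Proposition~\ref{fullyfaithful}, via Tannakian formalism. Concretely, I would show that the pullback tensor functor $f^{*}\,:\,(\Vect^{str}(X),\,\omega_{x})\,\longrightarrow\, (\Vect^{str}(Y),\,\omega_{y})$ is an equivalence of neutral Tannakian categories; the isomorphism of Tannaka duals $\pi^{str}(f)$ then follows by the standard Tannakian dictionary.

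First I would verify essential surjectivity. Since $\pi^{et}(f)$ is an isomorphism it is in particular surjective, so $f$ is genuinely ramified by definition. Tannakian duality for the \'etale fundamental group (together with the Lange--Stuhler theorem) identifies $f^{*}\,:\,\Vect^{et}(X)\,\longrightarrow\, \Vect^{et}(Y)$ with the restriction functor along $\pi^{et}(f)$, so this functor is itself an equivalence and in particular essentially surjective. Theorem~\ref{main-basecase} applies and yields essential surjectivity of $f^{*}\,:\,\Vect^{str}(X)\,\longrightarrow\,\Vect^{str}(Y)$.

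Next I would establish fully faithfulness using Proposition~\ref{fullyfaithful}. Given two stratified bundles $\scrE\,=\,(E_{n})_{n\ge 0}$ and $\scrF\,=\,(F_{n})_{n\ge 0}$ on $X$, a morphism $\scrE\,\longrightarrow\,\scrF$ is a sequence of morphisms $\phi_{n}\,:\,E_{n}\,\longrightarrow\, F_{n}$ intertwining the Frobenius identifications. As noted in Section~2, for $n$ sufficiently large both $E_{n}$ and $F_{n}$ are semistable with vanishing Chern classes and share the Hilbert polynomial of the trivial rank-$r$ bundle, hence have slope zero with respect to the fixed polarization. Proposition~\ref{fullyfaithful} provides, for all large $n$, a bijection $\Hom_{X}(E_{n},\, F_{n})\,\cong\, \Hom_{Y}(f^{*}E_{n},\, f^{*}F_{n})$, and since $f^{*}$ commutes with absolute Frobenius pullback these bijections assemble into a bijection $\Hom(\scrE,\,\scrF)\,\cong\, \Hom(f^{*}\scrE,\, f^{*}\scrF)$.

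Combining both steps, $f^{*}$ on stratified bundles is a fully faithful and essentially surjective tensor functor, hence an equivalence of neutral Tannakian categories; consequently $\pi^{str}(f)$ is an isomorphism of Tannaka duals. The most delicate point is the transition from the Hom-bijections at each level $n$ to a Hom-bijection at the level of the full Frobenius tower, but this is routine from the functoriality of $f^{*}$ and its commutation with $F_{X}$, $F_{Y}$; the substantive difficulty has already been absorbed into Theorem~\ref{main-basecase} and Proposition~\ref{fullyfaithful}.
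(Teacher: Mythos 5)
Your proposal is correct and follows essentially the same route as the paper: essential surjectivity comes from Theorem~\ref{main-basecase} (after noting the hypothesis makes $f$ genuinely ramified and $f^{*}$ an equivalence on $\Vect^{et}$), full faithfulness comes from applying Proposition~\ref{fullyfaithful} to the semistable terms $E_{n}$, $F_{n}$ for $n$ large and observing that a morphism of stratified bundles is determined by its large-$n$ components, and the conclusion follows from the Tannakian dictionary. Your extra remarks (slope zero from vanishing Chern classes, compatibility of the levelwise Hom-bijections with the Frobenius structure) only make explicit points the paper leaves implicit.
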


\begin{proof}
The hypothesis implies that $f$ is genuinely ramified, and the pullback functor 
$f^*\,:\,\Vect^{et}(X)\,\longrightarrow\, \Vect^{et}(Y)$ is an equivalence of categories. By Theorem 
\ref{main-basecase}, the pullback functor on stratified bundles $f^*\,:\, \Vect^{str}(X)\,\longrightarrow\, 
\Vect^{str}(Y)$ is essentially surjective.

Let $g\,:\,(V_n)_{n\ge 0}\,\longrightarrow\, (W_n)_{n\ge 0}$ be a morphism in the category $\Vect^{str}(X)$. Then by 
definition, $g$ consist of morphisms of vector bundles $$g_n\,:\,V_n\,\longrightarrow\, W_n$$ such that 
$F^*_X(g_{n+1})\,=\,g_n$ for all $n\,\ge\, 0$. Therefore, $g$ is
uniquely determined by $\{g_n\,\,\big\vert\, n\, \text{ sufficiently large}\}$.
Note that there exists an integer
$m$ such that for $n\,\ge\, m$, the vector bundles $V_n$ and $W_n$ are semistable. Since $f$ is genuinely
ramified, by Proposition \ref{fullyfaithful}, $$\Hom_X(V_n,\,W_n)\,\cong\, \Hom_Y(f^*V_n,\,f^*W_n)$$ for all $n\,\ge\, m$.
Hence $f^*\,:\, \Vect^{str}(X)\,\longrightarrow\, \Vect^{str}(Y)$ is also fully faithful. Since $f^*$ is a tensor
functor which is 
an equivalence of Tannakian categories, the induced homomorphism between the Tannakian duals is an isomorphism.
\end{proof}

\section{The general case}

In this section, we are given a family of morphisms $f_T\,:\, Y_T\,\longrightarrow\, X_T$ of irreducible smooth
projective varieties parametrized by an integral $k$--scheme $T$ such that the restriction of $f_T$ over the 
generic geometric point of $T$ is genuinely ramified (respectively, the restriction of $f_T$ induces an
isomorphism of \'etale fundamental groups). We show that the fiber $f_t$ for any $t$ in
an open dense subset of $T$ is genuinely ramified (respectively, $f_t$ induces an
isomorphism of \'etale fundamental groups). See Theorem \ref{etale-main} for the precise statement.

For a connected scheme $X$, let
\begin{equation}\label{y1}
\Cov(X)
\end{equation}
denote the category of finite \'etale covers of $X$, so the objects of $\Cov(X)$
are finite \'etale morphisms $Z\,\longrightarrow\, X$, and the morphisms from an object $Z\,\longrightarrow\, X$ to 
another object $Z'\,\longrightarrow\, X$ are the $X$-morphisms $Z\,\longrightarrow\, Z'$. Note that a morphism
of varieties $f\,:\, 
Y\,\longrightarrow\, X$ induces a pullback functor $\Cov(X)\,\longrightarrow\, \Cov(Y)$ given by the base change from 
$X$ to $Y$.

\begin{proposition} \label{etale-local}
Let $R$ be a complete discrete valuation ring with algebraically closed residue field $k$, and denote by $K^s$
the separable closure of the fraction field of $R$. Set $T\,=\,\spec(R)$, and let $\eta\,:\,\spec(K^s)\,
\longrightarrow\, T$ be the generic geometric point of $T$. Let $$f_T\ :\ Y_T\ \longrightarrow\ X_T$$ be a finite
flat generically smooth morphism of smooth proper 
integral $T$--schemes $X_T$ and $Y_T$ such that the closed fiber $$f_0\ :\ Y_0\ \longrightarrow\ X_0$$ is also a
finite flat generically smooth morphism of smooth connected proper schemes over $k$. Let
$$f_{\eta}\ :\ Y_{\eta}\ \longrightarrow\ X_{\eta}$$ be the fiber of $f_T$ over the geometric generic point. If 
$f_{\eta}$ is genuinely ramified, then so is $f_0$.

Let $y_0$ (respectively, $y$) be a geometric point of $Y_0$ (respectively, $Y_{\eta})$, with $x_0\,:=\,f_0(y_0)$ and 
$x\,:=\,f_\eta(y)$. Also, assume that $X_0$ is not contained in the branch locus of $f_T$. If the homomorphism 
$$\pi_1^{et}(f_\eta)\ :\ \pi_1^{et}(Y_\eta,\,y)\ \longrightarrow\ \pi_1^{et}(X_\eta,\,x)$$
induced by $f_{\eta}$ is an isomorphism, then the induced homomorphism
$$\pi_1^{et}(f_0)\ :\ \pi_1^{et}(Y_0,\,y_0)\ \longrightarrow\ \pi_1^{et}(X_0,\,x_0)$$ is an isomorphism.
\end{proposition}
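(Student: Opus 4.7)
The plan combines Grothendieck's specialization theorem for \'etale fundamental groups with Zariski-Nagata purity of the branch locus. Since $T=\spec(R)$ is strictly henselian and the families $X_T\to T$, $Y_T\to T$ are proper smooth with geometrically connected fibres, SGA 1, Exp.~X gives surjective specialization homomorphisms $s_X\colon \pi_1^{et}(X_\eta, x)\twoheadrightarrow \pi_1^{et}(X_0, x_0)$ and $s_Y\colon \pi_1^{et}(Y_\eta, y)\twoheadrightarrow \pi_1^{et}(Y_0, y_0)$, together with equivalences of Galois categories $\Cov(X_T)\simeq \Cov(X_0)$ and $\Cov(Y_T)\simeq \Cov(Y_0)$. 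These fit into a commutative square with $\pi_1^{et}(f_\eta)$ on top and $\pi_1^{et}(f_0)$ on the bottom.

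For the first assertion, surjectivity of $\pi_1^{et}(f_\eta)$ combined with surjectivity of $s_X$ makes the composite $\pi_1^{et}(Y_\eta)\to \pi_1^{et}(X_\eta)\twoheadrightarrow\pi_1^{et}(X_0)$ surjective. By commutativity it factors through $\pi_1^{et}(Y_0)$, forcing $\pi_1^{et}(f_0)$ surjective, i.e.\ $f_0$ is genuinely ramified.

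For the second assertion, surjectivity of $\pi_1^{et}(f_0)$ is the first claim, so it remains to show that $f_0^*\colon \Cov(X_0)\to \Cov(Y_0)$ is essentially surjective. Given $Z_0\to Y_0$ \'etale, I lift it via the specialization equivalence to $Z_T\to Y_T$ \'etale and restrict to the geometric generic fibre $Z_\eta\to Y_\eta$. Since $\pi_1^{et}(f_\eta)$ is an isomorphism, there is an \'etale $W_\eta\to X_\eta$ with $f_\eta^* W_\eta\cong Z_\eta$. If $W_\eta$ extends to an \'etale cover $W_T\to X_T$, then $f_T^* W_T\in\Cov(Y_T)$ has generic fibre $Z_\eta$ and must be isomorphic to $Z_T$ (by full faithfulness of the restriction $\Cov(Y_T)\to \Cov(Y_\eta)$); passing to closed fibres yields $f_0^*(W_T\vert_{X_0})\cong Z_0$, as desired.

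To extend $W_\eta$, I take $W_T\to X_T$ to be the normalization of $X_T$ in $W_\eta$: it is finite, normal, and \'etale on $X_\eta$. By Zariski-Nagata purity its branch locus is pure of codimension one, and being disjoint from $X_\eta$ it is contained in $X_0$. Since $X_0$ is irreducible of codimension one in $X_T$, the branch locus is either empty (the desired case) or equal to $X_0$. Suppose the latter, and let $\xi_0\in X_0$, $\eta_0\in Y_0$ denote the generic points; $f_0(\eta_0)=\xi_0$ because $f_0$ is finite surjective between irreducible schemes of the same dimension. The branch-locus hypothesis says $f_T$ is \'etale at $\eta_0$, so locally at $\eta_0$ the scheme $f_T^* W_T$ is an \'etale base change of $W_T$; \'etale base change preserves both normality and ramification, so $f_T^* W_T$ is normal at $\eta_0$ and ramified there. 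However, $f_T^* W_T$ has generic fibre $Z_\eta$, and $(Z_T)_{\eta_0}$ is the unique normal finite extension of the DVR $\mathcal{O}_{Y_T,\eta_0}$ with that fraction field, so $f_T^* W_T\cong Z_T$ \'etale-locally at $\eta_0$, contradicting the ramification of $f_T^* W_T$. Hence the branch locus is empty and $W_T\to X_T$ is \'etale.

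The main obstacle is the final local analysis: verifying that $f_T^* W_T$ is normal at $\eta_0$ (by \'etale descent of normality) and identifying it there with the \'etale cover $Z_T$ via uniqueness of the normal finite extension of a DVR with prescribed fraction field. The branch-locus hypothesis is used precisely to supply a codimension-one point at which $f_T$ is \'etale, so that ramification of $W_T$ is faithfully transported to $f_T^* W_T$ and conflicts with the existence of $Z_T$.
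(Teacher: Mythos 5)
Your overall strategy is the paper's: the first assertion is proved exactly as in the paper (surjective specialization map for $X$ plus the commutative square), and for the second you reduce to essential surjectivity of $\Cov(X_0)\to\Cov(Y_0)$, lift $Z_0$ to $Z_T$ via the specialization equivalence, descend the geometric generic fibre to an \'etale cover $W_\eta$ of $X_\eta$, extend it over $X_T$ by normalization, and rule out a branch divisor using purity together with the hypothesis that $X_0$ is not contained in the branch locus of $f_T$. Your endgame differs only mildly: you pull $W_T$ back to $Y_T$ and get a contradiction at the generic point $\eta_0$ of $Y_0$ via uniqueness of the normalization of the DVR $\cO_{Y_T,\eta_0}$, whereas the paper argues directly that the branch locus of $W\to X$ is contained in that of $Z\to X$ (a subcover of an extension unramified over the generic point $\xi_0$ of $X_0$ is unramified there); these are equivalent uses of the same hypothesis.

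The one genuine flaw is the step ``take $W_T\to X_T$ to be the normalization of $X_T$ in $W_\eta$: it is finite.'' The cover $W_\eta$ is defined over $K^s$, so its function field contains $K^s$ and is an \emph{infinite} algebraic extension of $k(X_T)$; the integral closure of $\cO_{X_T}$ in it is integral but not finite and not of finite type, so Zariski--Nagata purity and your fibrewise analysis (branch locus contained in $X_0$, restriction to the closed fibre, etc.) do not apply to it as written. This is precisely the point the paper handles by first descending: there is a finite separable extension $K'$ of the fraction field such that the pullback of $Z_T$ to $Y_T\times_T\spec(K')$ comes from an \'etale cover $W\to X_T\times_T\spec(K')$ with $W\otimes_{K'}K^s=W_\eta$; one then replaces $T$ by its normalization $\widehat T$ in $K'$ --- again a complete discrete valuation ring with residue field $k$, so the closed fibres $X_0$, $Y_0$ are unchanged --- and normalizes $X_{\widehat T}$ in $k(W)$. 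This descent is also needed to make sense of your comparison of $f_T^*W_T$ with $Z_T$ over $k(Y_T)$, since the isomorphism $f_\eta^*W_\eta\cong Z_\eta$ is a priori only defined over $K^s$. With this routine spreading-out step inserted, the remainder of your argument is correct and coincides with the paper's proof.
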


\begin{proof}
We first note that the specialization map $\pi_1^{et}(X_{\eta},\,x)\,\longrightarrow\, \pi_1^{et}(X_0,\,x_0)$ is
surjective (\cite{SGA}, \cite[9.2]{Mur}). Also, by the functoriality of $\pi_1^{et}$, the following diagram 
is commutative:
\[\xymatrix{
\pi_1^{et}(Y_{\eta},\,y)\ar[r]\ar[d] & \pi_1^{et}(X_{\eta},\,x)\ar[d]\\
\pi_1^{et}(Y_0,\,y_0)\ar[r] & \pi_1^{et}(X_0,\,x_0).
 }
 \]
Therefore, if $f_{\eta}$ is genuinely ramified, then the homomorphism
$$\pi_1^{et}(Y_{\eta},\,y)\,\longrightarrow\, \pi_1^{et}(X_0,\,x_0)$$
induced by $f_{\eta}$ is also surjective. Consequently,
the map $\pi_1^{et}(f_0)\,:\,\pi_1(Y_0)\,\longrightarrow\, \pi_1(X_0)$ is also surjective, i.e., the map
$f_0$ is genuinely ramified. This is also equivalent to the statement that the pullback functor
$\Cov(X_0)\,\longrightarrow\, \Cov(Y_0)$ (see \eqref{y1}) is fully faithful (\cite{SGA}, \cite[Lemma 58.4.1]{SP}).
 
Now to prove that $\pi_1^{et}(f_0)\,:\,\pi_1^{et}(Y_0)\,\longrightarrow\, \pi_1^{et}(X_0)$ is an isomorphism under the 
additional hypothesis that $\pi_1^{et}(f_\eta)$ is an isomorphism, it is enough to show that the pullback
functor $\Cov(X_0)\,\longrightarrow\, \Cov(Y_0)$ is essentially surjective.
 
There is an equivalence of categories $\Cov(Y_T)\,\stackrel{\sim}{\longrightarrow}\, \Cov(Y_0)$ (see \cite[Lemma 58.9.1]{SP} or 
\cite{SGA}). Let $Z_0\,\longrightarrow\, Y_0$ be a finite \'etale connected cover. This induces a finite \'etale
connected covering
$Z_T\,\longrightarrow\, Y_T$. Since $\pi_1(Y_{\eta})\,\longrightarrow\, \pi_1(X_{\eta})$ is an isomorphism, the 
functor $\Cov(X_{\eta})\,\longrightarrow\, \Cov(Y_{\eta})$ is an equivalence of categories. So we conclude that 
$Z_{\eta}\,\longrightarrow\, Y_{\eta}$ is the pull-back of an \'etale connected covering
$W_{\eta}\,\longrightarrow\, X_{\eta}$.
 
There exists a finite separable extension $K/k(T)$ such that $$Z_T\times_T \spec(K)\ \longrightarrow\
Y_T\times_T \spec(K)$$ is the pullback of an \'etale connected cover $W\,\longrightarrow\, X_T\times_T \spec(K)$
with $$W\times_{\spec(K)}\spec(K^s)\,=\,W_{\eta}.$$ Since $X_T$ is integral, it
follows that $W$ is also integral. The normalization of $T$ in $K$ will be denoted by
$\widehat T$. Denote by
$X_{\widehat T}$, $Y_{\widehat T}$ and $Z_{\widehat T}$ the base change --- to $\widehat T$ --- of $X_T$,
$Y_T$ and $Z_T$ respectively. Let $W_{\widehat T}\,\longrightarrow\, X_{\widehat T}$ be the normalization of
$X_{\widehat T}$ in $k(W)$. So we get the following commutative diagram:
\[\xymatrix{
Z_{\widehat T}\ar[r]\ar[d] & W_{\widehat T}\ar[d]\\
Y_{\widehat T}\ar[r] & X_{\widehat T}.
}
\]
Note that $Z_T\,\longrightarrow\, Y_T$ is \'etale, and the branch locus of $Y_T\,\longrightarrow\, X_T$ does not 
contain $X_0$. Consequently, the 
branch locus of $Z_T\,\longrightarrow\, X_T$ does not contain $X_0$. Therefore, the branch 
locus of $W_{\widehat T}\,\longrightarrow\, X_{\widehat T}$ does not contain $X_0$; moreover,
this map is \'etale over 
the generic point of $\widehat T$. Thus by the purity of branch locus (\cite[Exp. X, Thm. 3.1]{SGA}, \cite[Lemma 
53.20.4]{SP}), the map $W_{\widehat T}\,\longrightarrow\, X_{\widehat T}$ is \'etale, and
therefore this map restricted to the 
closed fiber $W_0\,\longrightarrow\, X_0$ is also \'etale.
Note that the pullback of $W_{\widehat T}\,\longrightarrow\, 
X_{\widehat T}$ to $Y_{\widehat T}$ is $Z_{\widehat T}\,\longrightarrow\, Y_{\widehat T}$. Thus the pullback of 
$W_0\,\longrightarrow\, X_0$ to $Y_0$ is $Z_0\,\longrightarrow\, Y_0$. This completes the proof of
the statement that the pullback 
functor $\Cov(X_0)\,\longrightarrow\, \Cov(Y_0)$ is essentially surjective.
As mentioned before, this implies that
$\pi_1^{et}(f_0)\,:\,\pi_1^{et}(Y_0)\,\longrightarrow\, \pi_1^{et}(X_0)$ is an isomorphism. This
completes the proof of the proposition.
\end{proof}

\begin{theorem}\label{etale-main}
Let $k$ be an algebraically closed field, and let $T$ be a finite type connected integral scheme defined over $k$.
Denote by $K^s$ 
the separable closure of the function field of $T$, and let $\eta\,:\,\spec(K^s)\,\longrightarrow\, T$ be the generic 
geometric point of $T$. Let $f_T\,:\,Y_T\,\longrightarrow\, X_T$ be a finite generically smooth morphism of proper 
integral smooth schemes over $T$ such that the morphism $f_{\eta}\,:\,Y_{\eta}\,\longrightarrow\, 
X_{\eta}$ is genuinely ramified. Then there is an open dense subset $U$ of $T$ such that for all closed points $t\,
\in\,U$, the morphism of the fibers over $t$, $$f_t\,:\,Y_t\,\longrightarrow\, X_t,$$ is a
genuinely ramified map of smooth proper varieties.
 
Moreover, if $f_{\eta}$ induces an isomorphism of the \'etale fundamental groups, then there is open dense subset 
$U'$ of $T$ such that for every closed points $t\,\in\, U'$, the morphism of the fibers over $t$,
$$f_t\,:\,Y_t\,\longrightarrow\, X_t,$$ induces an isomorphism
$\pi_1^{et}(Y_t) \,\stackrel{\sim}{\longrightarrow}\, \pi_1^{et}(X_t)$
of the \'etale fundamental groups.
\end{theorem}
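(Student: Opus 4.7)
The plan is to reduce to Proposition \ref{etale-local} via a standard spreading-out/specialization argument, treating the two assertions of the theorem in sequence. In both cases, the strategy is to first shrink $T$ to a dense open on which the family $f_T$ is uniform enough for the given condition on $f_{\eta}$ to propagate to the closed fibers.

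For the first assertion (genuine ramification of $f_t$ for $t$ in a dense open), I would begin by shrinking $T$ so that the family $f_T$ behaves uniformly: $X_T$ and $Y_T$ have geometrically integral smooth proper fibers, $f_T$ is finite flat generically smooth on every fiber, and the branch locus $B_T\subset X_T$ is flat over $T$ with no fiber of $X_T/T$ contained in it. I would then use the characterization of genuine ramification via the Galois closure: $f_{\eta}$ is genuinely ramified iff the Galois group $G$ of the Galois closure $Y_{\eta}^G\to X_{\eta}$ is generated by the inertia subgroups at points above the ramification locus. Spreading out this Galois closure to a finite Galois cover $Z_T\to X_T$ with group $G$ (after a further shrinking of $T$), the inertia-generation condition becomes constructible in $t\in T$ and is assumed to hold at $\eta$, hence holds on a dense open $U\subset T$. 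For $t\in U$, the map $f_t$ is then genuinely ramified.

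For the second assertion, Part 1 reduces us to the case where $f_t$ is already genuinely ramified, so $\pi_1^{et}(f_t)$ is automatically surjective. It remains to verify injectivity, equivalently to show that every connected finite \'etale cover $\widetilde Y_t\to Y_t$ descends to an \'etale cover of $X_t$. Given such a cover, one spreads it out, using rigidity of \'etale covers, to a finite \'etale cover $\widetilde Y_V\to Y_V$ over an \'etale neighborhood $V$ of $t$; restriction to the generic fiber yields $\widetilde Y_{\eta}\to Y_{\eta}$, which descends by hypothesis to $\widetilde X_{\eta}\to X_{\eta}$. Following the model of Proposition \ref{etale-local}, I would then pass to a finite cover $\widehat T\to T$ over which $\widetilde X_{\eta}$ is defined, normalize $X_{\widehat T}$ in the function field of $\widetilde X_{\eta}$, and use the purity of branch locus together with the assumption that no fiber lies in the branch locus to conclude that the resulting morphism is \'etale over $X_{\widehat T}$. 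Restricting to the fiber over a preimage of $t$ yields the desired descent of $\widetilde Y_t$ to $X_t$.

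The main obstacle I expect is Part 1: in particular, establishing the spreading out of the Galois closure and the constructibility of the inertia-generation condition in positive characteristic, where wild ramification can complicate the relative construction. One likely needs to shrink $T$ to a locus of uniform ramification type, or replace the Galois-closure approach by a more intrinsic construction involving the relative Stein factorization of $f_T$ and the \'etale locus of its finite part. Part 2, granted Part 1, is essentially a relative version of the argument already carried out in Proposition \ref{etale-local}, combined with standard approximation and spreading out of \'etale covers.
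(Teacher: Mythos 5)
Your Part 1 contains the genuine gap, and you have flagged it yourself: the entire content of the first assertion is hidden in the unproved claim that the inertia-generation condition for a spread-out Galois closure ``becomes constructible in $t$'' (and, implicitly, that it persists from the geometric generic fibre to nearby closed fibres). Nothing in the proposal establishes this, and the fallback suggestions (uniform ramification type, relative Stein factorization) are not arguments. There is also a smaller inaccuracy in the criterion itself: for a non-Galois $f_\eta$ with Galois closure $Z_\eta\to X_\eta$ of group $G$ and $Y_\eta=Z_\eta/H$, genuine ramification is the condition that $H$ together with the inertia subgroups generates $G$, not that the inertia subgroups alone generate $G$. The paper avoids all of this by a much softer mechanism: after shrinking $T$ so that $f_T$ is flat, every fibre $f_t$ is finite flat generically smooth between smooth proper varieties, and no $X_t$ lies in the branch locus, one fixes a closed point $t$, chooses a complete discrete valuation ring $R$ with fraction field $k(T)$ dominating $\cO_{T,t}$, and invokes Grothendieck's specialization theorem: $\pi_1^{et}(X_{\bar\eta})\longrightarrow \pi_1^{et}(X_t)$ is surjective for the proper family over $\spec(R)$. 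Functoriality of $\pi_1^{et}$ then forces surjectivity of $\pi_1^{et}(Y_t)\longrightarrow\pi_1^{et}(X_t)$ from surjectivity of $\pi_1^{et}(f_{\bar\eta})$ (together with invariance of $\pi_1^{et}$ under the extension from $K^s$ to the separable closure of ${\rm Frac}(R)$). No Galois closure, no inertia groups, no wild ramification issues. (If you insist on your route, the missing step can in fact be rescued by noting that the fixed locus of each $g\in G$ in $Z_T$ is closed and proper over $T$, so inertia can only grow under specialization; but that argument is absent from the proposal.) Note also that this same specialization argument already gives the surjectivity needed in your Part 2, so the second assertion should not be made to depend on your unproved Part 1.

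Your Part 2 is essentially the paper's proof: the paper proves exactly this statement as Proposition \ref{etale-local} over a complete discrete valuation ring and then applies it at each closed point $t$ via the dominating DVR above. The only loose step is your claim that a finite \'etale cover of $Y_t$ spreads out ``over an \'etale neighborhood $V$ of $t$'': the correct statement, and the one the paper uses, is the equivalence $\Cov(Y_{\widehat T})\stackrel{\sim}{\longrightarrow}\Cov(Y_t)$ for $Y_{\widehat T}$ proper over the complete (or henselian) local ring, after which one restricts to the generic fibre, descends to $X$ there, takes a normalization over a finite extension of the base, and concludes by purity of the branch locus together with the hypothesis that $X_t$ is not contained in the branch locus --- precisely as you outline and as the paper does.
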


\begin{proof}
By replacing $T$ by an open dense subscheme, we may assume that $f_T$ is flat, and for all closed points $t\,\in\, T$, 
$$f_t\,:\,Y_t\,\longrightarrow\, X_t$$ is a finite flat generically smooth morphism of smooth proper varieties. Also,
since $f_T$ is generically \'etale, the branch locus of $f_T$ is a proper closed subscheme of $X_T$. Consequently, by 
shrinking $T$ further, if necessary, we may assume that for all $t\,\in\, T$, the fiber $X_t$ is not contained in
the branch locus of $f_T$.
 
Fix a closed point $t\,\in\, T$. Let $R$ be the completion of a discrete valuation
ring with fraction field $k(T)$ dominating 
$\cO_{T,t}$. Denote $\widehat T\,=\,\spec(R)$, and let $f_{\widehat T}\,:\,Y_{\widehat T}\,\longrightarrow\, 
X_{\widehat T}$ be the pullback of $f_T$ along the natural morphism
$\widehat T\,\longrightarrow\, T$. Denote by $\widehat K$ the 
separable closure of the fraction field of $R$. Then $K^s$ is a subfield of $\widehat K$. Note that we have
$$\pi_1^{et}(X_{\eta})\,\,\cong\,\, \pi_1^{et}(X_{\eta}\otimes_{K^s}\widehat K)$$ (\cite{SGA},
\cite[Proposition 7.3.2]{Mur}). Now the given condition that $Y_{\eta}\,\longrightarrow\, X_{\eta}$ is genuinely 
ramified implies that $Y_{\widehat K}\,\longrightarrow\, X_{\widehat K}$ is also genuinely ramified. Hence by 
Proposition \ref{etale-local}, $Y_t\,\longrightarrow\, X_t$ is genuinely ramified.

Finally, if $Y\,\longrightarrow\, X$ induces an isomorphism of \'etale fundamental groups, then $$Y\otimes 
\widehat{K}\ \longrightarrow\ X\otimes \widehat K$$ also induces an isomorphism $\pi_1^{et}(Y_{\widehat 
K})\,\longrightarrow\, \pi_1^{et}(X_{\widehat K})$. Now apply Proposition \ref{etale-local} to conclude that 
$\pi_1^{et}(Y_t)\,\longrightarrow\, \pi_1^{et}(X_t)$ is an isomorphism.
\end{proof}

\section{Proof of Theorem \ref{main}}

Let $f\,:\,Y\,\longrightarrow\, X$ be a finite separable morphism of smooth projective varieties defined
over an algebraically 
closed field $k$ of characteristic $p\,>\,0$. Let $\xi$ be a closed point of $Y$. Let $\scrR(X,f(\xi),P_X)$ and 
$\scrR(Y,\xi, P_Y)$ be the representation spaces over $X$ and $Y$ respectively, as in the proof of Theorem 
\ref{main-basecase}. Let $\scrE\,=\,\{E_n\}_{n\ge 0}$ be a stratified vector bundle on $Y$.

There exists a finite type smooth integral $\overline \FF_p$--scheme $T$ with function field $\FF_p(T)\,\subset\, K$
satisfying the following three statements:
\begin{enumerate}
 \item There is a finite generically smooth $T$--morphism $f_T\,:\,Y_T\,\longrightarrow\, X_T$ such that the
base change of $f_T$ to $k$ is $f$.

\item The representation spaces $\scrR(X,f(\xi),P_X)$ and $\scrR(Y,\xi, P_Y)$ are both defined over $k(T)$.

\item There exists a stratified vector bundle $\scrE_T\,=\,\{E_{T,n}\}_{n\ge 1}$ on $Y_T$ such that the
pullback of $\scrE_T$ along the morphism $Y\,\longrightarrow\, Y_T$ is $\scrE$.
\end{enumerate}
This is because of the following:
Since $X$, $Y$, $f$, $\scrE$ and the representation spaces are all of finite type, there exist finitely many 
elements $z_1,\,\cdots,\, z_m \,\in\, k$ such that $Y$, $X$, $f$, $\scrR(X,f(\xi),P_X)$, $\scrR(Y,\xi, P_Y)$ and $\scrE$ 
are all defined over $\spec(\overline\FF_p[z_1,\cdots,z_m])$. Take $T$ to be a smooth affine open subset of 
$\spec(\overline\FF_p[z_1,\cdots,z_m])$.

Here we view $\scrE$ as a locally free $\cO_Y$--coherent ${\mathcal D}_Y$--module. For any integer $n\, \geq\, 1$, let $\scrD_Y ^{< n}$ denote the
subsheaf of $\scrD_Y$ of differential operators of order strictly less than $n$. Note that for any open $U\,\subset\, Y$ and $n\,\ge\, 0$,
\[
E_n(U)\,=\, \{s\,\in\, \scrE(U)\,\,\big\vert\,\, D(s)\,=\,0\ \ \forall\ \, D\,\in\, \scrD^{<p^n}_Y\, \text{ with }\, D(1)\,=\,0\}
\]
\cite[Theorem 1.3]{Gi}. We obtain a sequence of subsheaves
$$\scrE\,=\,E_0\,\supset\, E_1\,\supset\, E_2\,\supset\, \cdots\,\supset\, E_j\, \supset\, E_{j+1}\,\supset\, \cdots .$$
In the proof of \cite[Theorem 1.3]{Gi}, it is shown that $E_n$ has an $\cO_Y$--module structure with respect to which $E_n$ is a free $\cO_Y$--module
and $F^*E_{n+1}$ is canonically isomorphic to $E_{n}$.
For an open set $U$ of $Y_T$, we define a similar sequence of subsheaves of the locally free $\scrE_T$:
\[
E_{T,n}(U)\,=\, \{s\,\in\, \scrE_T(U)\,\,\big\vert\,\, D(s)\,=\,0\ \ \forall\ \, D\,\in\, \scrD^{<p^n}_{Y_T/T}\, \text{ with }\, D(1)\,=\,0\},
\]
where $\scrD_{Y_T/T}$ is the sheaf of algebra of relative differential operators for the projection $Y_T\, \longrightarrow\, T$.
Since the differential operators in $\scrD_{Y_T/T}$ are $\cO_T$--linear, the sheaves $E_{T,n}$ have an $\cO_T$--linear structure as well. Moreover, the inverse image 
of $E_{T,n}$ along $Y\,\longrightarrow\, Y_T$ tensored with $k$ over $\cO_T$ is the sheaf $E_n$. Also, the $\cO_Y$--module structure on $E_n$ induces an $\cO_{Y_T}$--module structure on 
$E_{T,n}$. Since $E_n$ is a locally free $\cO_Y$--module, it follows that $E_{T,n}$ is also a locally free $\cO_{Y_T}$--module. Finally, $F^*E_{T,n+1}$ is isomorphic to
$E_{T,n}$ where $F$ is the absolute Frobenius morphism.

Using Theorem \ref{etale-main}, by shrinking $T$ if necessary, we may also assume that for all closed points $t\,\in 
\, T$, the homomorphism $$\pi_1^{et}(f_t)\ :\ \pi_1(Y_t)\ \longrightarrow\ \pi_1(X_t)$$ induced by
$f_t\,:\,Y_t\,\longrightarrow\, X_t$ is an isomorphism.

Now for a given $n$, by \cite[Lemma 7]{shatz} (also see \cite[Theorem 5]{nitsure}) there exists an open dense 
subset $U$ of $T$ such that the Harder--Narasimhan filtration of $f_*E_n$ is compatible with the
Harder--Narasimhan filtration of the fibers 
$(f_t)_*E_{t,n}$ for all $t\,\in\, U$, where $E_{t,n}$ is the restriction of $E_{T,n}$ to $Y_t$.
Consequently, there is a Harder--Narasimhan 
filtration of the family of vector bundles $(f_T)_*E_{U,n}$. Let $F_{U,n}$ be the maximal degree $0$ subsheaf of 
$(f_U)_*E_{U,n}$. Then the restriction $F_{t,n}$ of $F_{U,n}$ is the maximal degree $0$ subsheaf of $(f_t)_*E_{t,n}$.

Now we apply Theorem \ref{main-basecase} to $f_t\,:\,Y_t\,\longrightarrow\, X_t$ and the stratified vector bundle 
$\{E_{t,m}\}$ to conclude that $E_{t,n}\,=\,f_t^*G_{t,n}$ for some $\{G_{t,n}\}$ in $\Vect^{str}(X_t)$, for all 
$t\,\in\, U$.

By the projection formula, we obtain
\begin{equation} \label{eq5}
 (f_t)_*E_{t,n}\,=\,(f_t)_*f_t^*G_{t,n}\,=\,G_{t,n}\otimes_{\cO_{X_t}} (f_t)_*\cO_{Y_t}
\end{equation}

Also, the genuine ramification of $f_t$ implies that the maximal degree zero subsheaf of
$(f_t)_*\cO_{Y_t}$ 
is $\cO_{X_t}$. This and \eqref{eq5} together imply that $G_{t,n}$ is the maximal degree zero subsheaf of
$(f_t)_*E_{t,n}$. Consequently, we obtain that
$$G_{t,n}\ =\ F_{t,n}, \ \ \, E_{t,n}\ =\ f_t^*F_{t,n}$$ for all $t\,\in\, U$, and hence it follows that
$E_{U,n}\,=\, (f_U)^*F_{U,n}$. Restricting to the generic 
point of $U$ yields that $E_n\,=\,f^*F_n$.

So we obtain a stratified vector bundle $\scrF\,=\,\{F_n\}$ on $X$ such that $f^*\scrF\,=\,\scrE$.
Therefore, the functor $$f^*\ :\ \Vect^{str}(X)\ \longrightarrow\ \Vect^{str}(Y)$$ is essentially surjective. That
it is fully faithful is already 
proved in Corollary \ref{basecase} without any assumption on the base field. This completes the proof
of Theorem \ref{main}.

\begin{remark}\label{surjective}
Note that the above argument, together with the proof of Theorem \ref{main-basecase}, also shows that if
$f\,:\,Y\,\longrightarrow\, X$ is a genuinely ramified map,
then the induced homomorphism $\pi_1^{str}\,:\,\pi_1^{str}(Y)\,\longrightarrow\, \pi_1^{str}(X)$ is surjective
(see \cite[p. 139, Proposition 2.21]{DM}).
\end{remark}

\section{Proof of Proposition \ref{fullyfaithful}}\label{ffp}

Let $f\,:\,Y\,\longrightarrow\, X$ be a finite generically smooth morphism between two irreducible 
projective varieties of the same dimension. Denote by ${\rm Aut}(Y/X)$ the group of automorphisms of
$Y$ over the identity map of $X$. The morphism $f$ will be called \textit{Galois} if there is a
reduced finite subgroup $\Gamma\, \subset\, {\rm Aut}(Y/X)$ such that $X\,=\, Y/\Gamma$.

Note that we have ${\mathcal O}_X \,\subset\, f_*{\mathcal O}_Y$, because $f^*{\mathcal O}_X\,=\, {\mathcal O}_Y$
(use adjunction).

\begin{lemma}\label{ffl1}
Let $f\,:\,Y\,\longrightarrow\, X$ be a generically smooth morphism between irreducible smooth projective varieties
of the same dimension. Assume that $f$ is Galois of degree $d$. Then
$$
f^*((f_*{\mathcal O}_Y)/{\mathcal O}_X) \ \, \subseteq\ \, {\mathcal O}^{\oplus (d-1)}_X
$$
\end{lemma}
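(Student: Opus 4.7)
The plan is to embed $f^* f_* \mathcal{O}_Y$ into $\mathcal{O}_Y^{\oplus d}$ by base change along $Y \times_X Y$ and the Galois action, identify the subsheaf $f^* \mathcal{O}_X = \mathcal{O}_Y$ with the diagonal, and pass to quotients. (I read the right-hand side of the displayed inclusion as $\mathcal{O}_Y^{\oplus (d-1)}$, since the left-hand side is a sheaf on $Y$; the $\mathcal{O}_X^{\oplus(d-1)}$ in the statement appears to be a typo.)

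Since $f$ is a finite morphism between smooth varieties of the same dimension, miracle flatness implies that $f$ is flat, so $f^*$ is exact and, by flat base change, $f^* f_* \mathcal{O}_Y \cong p_{2*} \mathcal{O}_{Y \times_X Y}$, where $p_1, p_2 \,:\, Y \times_X Y \to Y$ are the two projections. Using $X = Y/\Gamma$ with $|\Gamma| = d$, introduce the finite morphism
\[
\mu \,:\, \coprod_{\gamma \in \Gamma} Y \,\longrightarrow\, Y \times_X Y,\qquad (\gamma,\,y) \,\longmapsto\, (y,\,\gamma(y)).
\]
This $\mu$ is an isomorphism over the étale locus of $f$ (dense since $f$ is generically smooth) and is surjective everywhere, because $\Gamma$ acts transitively on the fibers of $f$. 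Since $p_2 \,:\, Y \times_X Y \to Y$ is finite flat and $Y$ is smooth, $Y \times_X Y$ is Cohen--Macaulay and in particular has no embedded primes, so the induced ring map $\mathcal{O}_{Y \times_X Y} \hookrightarrow \mu_* \mathcal{O}_{\coprod_\gamma Y}$ is injective. Applying $p_{2*}$ and observing that $p_2 \circ \mu$ restricted to the $\gamma$-th copy of $Y$ is the automorphism $\gamma \,:\, Y \to Y$, we obtain the injection
\[
f^* f_* \mathcal{O}_Y \,\hookrightarrow\, \bigoplus_{\gamma \in \Gamma} \gamma_* \mathcal{O}_Y \,\cong\, \mathcal{O}_Y^{\oplus d}.
\]

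A direct local calculation (writing locally $A = B^\Gamma \subset B$, the composite $B \otimes_A B \hookrightarrow \prod_{\gamma \in \Gamma} B$ reads $b_1 \otimes b_2 \mapsto (b_1\,\gamma(b_2))_\gamma$) identifies the image of the canonical inclusion $\mathcal{O}_Y = f^* \mathcal{O}_X \hookrightarrow f^* f_* \mathcal{O}_Y$ in $\mathcal{O}_Y^{\oplus d}$ with the diagonal $\Delta(\mathcal{O}_Y)$. Taking quotients in the short exact sequence $0 \to \mathcal{O}_X \to f_* \mathcal{O}_Y \to (f_* \mathcal{O}_Y)/\mathcal{O}_X \to 0$ under the exact functor $f^*$ then yields
\[
f^*\bigl((f_* \mathcal{O}_Y)/\mathcal{O}_X\bigr) \,\hookrightarrow\, \mathcal{O}_Y^{\oplus d}/\Delta(\mathcal{O}_Y) \,\cong\, \mathcal{O}_Y^{\oplus (d-1)},
\]
which is the desired inclusion. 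I expect the main technical point to be the injectivity of $\mathcal{O}_{Y \times_X Y} \hookrightarrow \mu_* \mathcal{O}_{\coprod_\gamma Y}$, which could fail along the ramification divisor of $f$ if $Y \times_X Y$ had embedded components there; this is precisely ruled out by the Cohen--Macaulay property of $Y \times_X Y$ coming from miracle flatness.
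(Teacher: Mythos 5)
Your argument is correct and is essentially the same as the paper's (which defers to \cite[Proposition 3.3]{BP} and reappears in the proof of Lemma \ref{ffl3} here): flat base change identifies $f^*f_*\mathcal{O}_Y$ with $(\pi_1)_*\mathcal{O}_{Y\times_X Y}$, this injects into $\mathcal{O}_Y^{\oplus d}$ via the $d$ copies of $Y$ indexed by $\Gamma$ (the normalization of $Y\times_X Y$ in the paper, your $\coprod_\gamma Y$), the image of $\mathcal{O}_Y=f^*\mathcal{O}_X$ is the diagonal, and one passes to the quotient; your Cohen--Macaulay/flatness justification of the injectivity of $\mathcal{O}_{Y\times_X Y}\hookrightarrow\mu_*\mathcal{O}_{\coprod_\gamma Y}$ is a more explicit version of the step the paper leaves implicit. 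You are also right that the right-hand side in the statement should be read as $\mathcal{O}_Y^{\oplus(d-1)}$.
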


\begin{proof}
The proof of the lemma is exactly identical to the proof of \cite[p.~12831, Proposition 3.3]{BP}. 
In \cite[Proposition 3.3]{BP} it is assumed that $\dim X\,=\, 1\,=\, \dim Y$, because \cite{BP}
is entirely dedicated to curves but this assumption that $\dim X\,=\, 1\,=\, \dim Y$ is not used in
the proof of \cite[Proposition 3.3]{BP}.
\end{proof}

Let $f\,:\,Y\,\longrightarrow\, X$ be a genuinely ramified map between irreducible smooth projective varieties.
As in Lemma \ref{ffl1}, assume that $f$ is a Galois map of degree $d$. Let
\begin{equation}\label{ega}
\Gamma\ :=\ \text{Gal}(f)
\end{equation}
be the Galois group of $f$, so $X\,=\, Y/\Gamma$. For any element $\sigma\, \in\, \Gamma$, let
\begin{equation}\label{ecs}
Y_\sigma \ :=\ \{(y, \, \sigma(y))\, \in\, Y\times_X Y\,\, \big\vert\,\, y\, \in\, Y\}\ \subset\ Y\times_X Y
\end{equation}
be the irreducible component of $Y\times_X Y$.

\begin{lemma}\label{ffl2}
There is an ordering of the elements of the group $\Gamma$ in \eqref{ega}
$$
\Gamma\ =\ \{\gamma_1,\, \cdots,\, \gamma_d\},
$$
and a self--map $\eta\ :\ \{1,\, \cdots,\, d\}\ \longrightarrow\ \{1,\, \cdots,\, d\}$, such that
the following four statements hold:
\begin{enumerate}
\item $\gamma_1\,=\, e$ (the identity element of the group $\Gamma$),

\item $\eta(1)\, =\, 1$,

\item $\eta(j) \, <\, j$ for all $j\, \in\, \{2,\, \cdots ,\, d\}$, and

\item $Y_{\gamma_j}\bigcap Y_{\gamma_{\eta(j)}}\, \not=\, \emptyset$ (see \eqref{ecs} for notation).
\end{enumerate}
\end{lemma}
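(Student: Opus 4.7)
My plan is to reformulate the lemma as the extraction of a spanning tree from the intersection graph of the components $Y_\sigma$, and then to derive connectedness of that graph from the genuine ramification hypothesis. I would first set up a graph $G$ whose vertex set is $\Gamma$, with distinct vertices $\sigma,\tau$ joined by an edge precisely when $Y_\sigma\cap Y_\tau\neq\emptyset$. Unwinding the definition of $Y_\sigma$, this edge condition is equivalent to $\sigma^{-1}\tau$ having a fixed point on $Y$. The data sought by the lemma is exactly that of a breadth-first search spanning tree of $G$ rooted at $e$: an ordering of $\Gamma$ starting at $\gamma_1=e$ together with a parent function $\eta$ satisfying $\eta(j)<j$ for $j\geq 2$. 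Such a tree exists if and only if $G$ is connected, so the lemma reduces to connectedness of $G$.

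To prove $G$ is connected, let $S\subset\Gamma$ denote the set of elements with at least one fixed point on $Y$. Because conjugation by any $\tau\in\Gamma$ sends the stabilizer of $y\in Y$ to the stabilizer of $\tau(y)$, the subset $S$ is closed under $\Gamma$-conjugation, so $\Gamma_0:=\langle S\rangle$ is a normal subgroup of $\Gamma$. Viewing $G$ as the left Cayley graph of $\Gamma$ with generating set $S$, its connected components are exactly the left cosets of $\Gamma_0$; in particular the component containing $e$ is $\Gamma_0$ itself. The induced action of $\Gamma/\Gamma_0$ on $Y/\Gamma_0$ has no fixed points (any fixed point would lift to an element of $\Gamma\setminus\Gamma_0$ with a fixed point on $Y$, contradicting $\Gamma_0\supset S$), so the intermediate map
$$
Y/\Gamma_0\,\longrightarrow\,Y/\Gamma\,=\,X
$$
is a finite étale Galois cover of degree $[\Gamma:\Gamma_0]$, through which $f$ factors. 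Surjectivity of $\pi_1^{et}(f)$ then forces this étale cover to be trivial, so $\Gamma_0=\Gamma$ and $G$ is connected.

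Once connectedness is in hand, extracting the desired ordering is standard: set $\gamma_1:=e$ and $\eta(1):=1$, and inductively at step $j\geq 2$ pick any $\gamma_j\in\Gamma\setminus\{\gamma_1,\ldots,\gamma_{j-1}\}$ adjacent in $G$ to some already-labeled $\gamma_{\eta(j)}$ with $\eta(j)<j$; connectedness guarantees that some such $\gamma_j$ exists at every stage until $\Gamma$ is exhausted. The main obstacle is the connectedness step: one must verify that $Y/\Gamma_0$ is well-behaved enough (normal, with $\Gamma/\Gamma_0$ acting freely) for $Y/\Gamma_0\to X$ to be a genuine finite étale cover, and invoke the standard equivalence between surjectivity of $\pi_1^{et}(f)$ and the non-factorization of $f$ through any nontrivial connected finite étale cover of $X$. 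The BFS construction itself is then routine graph theory.
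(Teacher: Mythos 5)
Your proposal is correct, but it reaches the key geometric input by a different route than the paper. The paper disposes of the lemma by transporting the combinatorial argument of \cite[Lemma 3.4]{BP}, whose only geometric ingredient is the connectedness of $Y\times_X Y$, and for that connectedness in dimension $>1$ it cites \cite[Theorem 2.4(3)]{BDP}. Your reduction to connectedness of the intersection graph of the components $Y_\sigma$, followed by the breadth--first extraction of the ordering and of $\eta$, is exactly the same combinatorial skeleton: since $Y\times_X Y=\bigcup_{\sigma\in\Gamma}Y_\sigma$ with each $Y_\sigma\cong Y$ irreducible, connectedness of your graph is equivalent to connectedness of $Y\times_X Y$. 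What you do differently is prove that connectedness rather than cite it: the conjugation--invariant set $S$ of elements with fixed points generates a normal subgroup $\Gamma_0$, the components of the graph are the cosets of $\Gamma_0$, the induced action of $\Gamma/\Gamma_0$ on $Y/\Gamma_0$ is free, so $Y/\Gamma_0\longrightarrow X$ is a connected finite \'etale cover through which $f$ factors, and surjectivity of $\pi_1^{et}(f)$ forces it to be trivial, i.e.\ $\Gamma_0=\Gamma$. This is sound, including in characteristic $p$: for a reduced finite group over an algebraically closed field, trivial stabilizers at closed points give trivial inertia groups, so the quotient map is \'etale (SGA~1, Exp.~V, \cite{SGA}), and the factorization criterion you invoke is the standard one (a nontrivial connected \'etale cover of $X$ through which $f$ factors pulls back to a cover of $Y$ admitting a section, hence disconnected, contradicting surjectivity of $\pi_1^{et}(f)$). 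Your argument buys self--containedness --- no appeal to \cite{BDP} --- at the price of being tailored to the Galois case, which is all the lemma needs; the paper's citation route rests on the stronger general fact that $Y\times_X Y$ is connected for an arbitrary genuinely ramified map.
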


\begin{proof}
The proof of the lemma is exactly identical to the proof of \cite[p.~12835, Lemma 3.4]{BP}.
The proof of \cite[Lemma 3.4]{BP} is combinatorial and only the connectedness of $Y\times_X Y$ is used. Note that
this is true even when $\dim X \,>\,1$ (see \cite[p.~6, Theorem 2.4(3)]{BDP}). The assumption that
$\dim X\,=\, 1\,=\, \dim Y$ is not used in the rest of the proof of \cite[Lemma 3.4]{BP}.
\end{proof}

\begin{remark}\label{remi}
The intersection $Y_{\gamma_j}\bigcap Y_{\gamma_{\eta(j)}}$ in the fourth statement of Lemma \ref{ffl2}
coincides with the fixed--point locus for the element $(\gamma_j)^{-1}\gamma_{\eta(j)}\, \in\, \Gamma\,=\,
\text{Gal}(f)$. Since $X$ and $Y$ are both smooth, from purity of branch locus we know that
$Y_{\gamma_j}\bigcap Y_{\gamma_{\eta(j)}}$ is a divisor on $Y$.
\end{remark}

The following lemma constitutes a key input in the proof of Proposition \ref{fullyfaithful}.

\begin{lemma}\label{ffl3}
Let $f\,:\,Y\,\longrightarrow\, X$ be a genuinely ramified map between irreducible smooth projective
varieties. Assume that $f$ is Galois of degree $d$. Then there are line bundles
$$
{\mathcal L}_j\ \subsetneq \ {\mathcal O}_Y, \ \ \, 1\, \leq\, j\, \leq\, d-1,
$$
such that
$$
f^*((f_*{\mathcal O}_Y)/{\mathcal O}_X) \ \, \subseteq\ \, \bigoplus_{j=1}^{d-1} {\mathcal L}_j.
$$
\end{lemma}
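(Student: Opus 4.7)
The plan is to realize $f^* f_* \mathcal{O}_Y$ as the pushforward $(p_1)_* \mathcal{O}_{Y \times_X Y}$, decompose the fiber product along its irreducible components $Y_\sigma$, and then use Lemma \ref{ffl2} to trade the rank-$(d-1)$ trivial bound of Lemma \ref{ffl1} for a sum of subsheaves carved out by the compatibility conditions along the pairwise intersections.

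First I would check that $f$ is flat (this is miracle flatness: $f$ is finite between smooth varieties of the same dimension) and apply flat base change to obtain $f^* f_* \mathcal{O}_Y \,\cong\, (p_1)_* \mathcal{O}_{Y \times_X Y}$, where $p_1\,:\,Y \times_X Y \,\longrightarrow\, Y$ is the first projection. Since $f$ is generically \'etale, $Y \times_X Y$ is generically \'etale over $Y$, hence generically reduced; being a finite flat cover of the smooth scheme $Y$ it is Cohen--Macaulay, in particular $S_1$, so $Y \times_X Y$ is reduced. Its irreducible components are precisely the graphs $Y_\sigma$ for $\sigma \,\in\, \Gamma$, each mapping isomorphically onto $Y$ via $p_1$. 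Restriction to components then produces an injection
$$
V \,:=\, f^* f_* \mathcal{O}_Y \,\hookrightarrow\, \bigoplus_{\sigma \in \Gamma} \mathcal{O}_{Y_\sigma} \,\cong\, \mathcal{O}_Y^{\oplus d},
$$
whose image consists of tuples $(s_\sigma)_{\sigma \in \Gamma}$ satisfying $s_\sigma\vert_{Y_\sigma \cap Y_\tau} \,=\, s_\tau\vert_{Y_\sigma \cap Y_\tau}$ for all $\sigma, \tau$. Under this identification, the subsheaf $f^* \mathcal{O}_X \,\hookrightarrow\, V$ coming from the adjunction unit is the diagonal $s \,\longmapsto\, (s, \ldots, s)$.

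Next I would invoke Lemma \ref{ffl2} to order $\Gamma \,=\, \{\gamma_1, \ldots, \gamma_d\}$ with $\gamma_1 \,=\, e$, together with the self-map $\eta$ of $\{1, \ldots, d\}$ satisfying $\eta(j) \,<\, j$ for $j \,\geq\, 2$ and $Y_{\gamma_j} \cap Y_{\gamma_{\eta(j)}} \,\neq\, \emptyset$. By Remark \ref{remi}, each such intersection is a nonzero effective divisor in $Y_{\gamma_j}$; transporting it along the isomorphism $p_1\,:\,Y_{\gamma_j} \,\stackrel{\sim}{\longrightarrow}\, Y$ yields a nonzero effective divisor $D_j \,\subset\, Y$. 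Set
$$
\mathcal{L}_{j-1} \,:=\, \mathcal{O}_Y(-D_j) \,\subsetneq\, \mathcal{O}_Y, \qquad j \,=\, 2, \ldots, d.
$$

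To conclude, consider the $\mathcal{O}_Y$--linear endomorphism $\Phi$ of $\mathcal{O}_Y^{\oplus d}$ given (writing $s_j$ for $s_{\gamma_j}$) by
$$
(s_1,\, s_2,\, \ldots,\, s_d) \,\longmapsto\, (s_1,\, s_2 - s_{\eta(2)},\, \ldots,\, s_d - s_{\eta(d)}).
$$
Since $\eta(j) \,<\, j$, the map $\Phi$ is triangular unipotent, hence an automorphism. For any $(s_\sigma) \,\in\, V$ the compatibility on $Y_{\gamma_j} \cap Y_{\gamma_{\eta(j)}}$ forces $s_j - s_{\eta(j)}$ to vanish along $D_j$, so it lies in $\mathcal{O}_Y(-D_j) \,=\, \mathcal{L}_{j-1}$. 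Therefore $\Phi(V) \,\subseteq\, \mathcal{O}_Y \oplus \bigoplus_{j=2}^{d} \mathcal{L}_{j-1}$, and the diagonal copy of $\mathcal{O}_Y$ inside $V$ is sent to the first summand. Passing to the quotient yields the desired inclusion
$$
f^*\bigl( (f_* \mathcal{O}_Y) / \mathcal{O}_X \bigr) \,=\, V / \mathcal{O}_Y \,\hookrightarrow\, \bigoplus_{j=1}^{d-1} \mathcal{L}_j.
$$
The only step that requires real care is the reducedness of $Y \times_X Y$ together with the identification of its components with the graphs $Y_\sigma$; once this is in place, Lemma \ref{ffl2} and the tree compatibility it supplies reduce the lemma to the unipotent change of basis above.
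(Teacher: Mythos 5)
Your proof is correct and follows essentially the same route as the paper: after flat base change, your restriction to the irreducible components $Y_\sigma$ of the (reduced) fibre product is exactly the paper's passage to the normalization $\widetilde{Y\times_X Y}\cong Y\times\Gamma$, and your unipotent map $\Phi$ built from the self--map $\eta$ of Lemma \ref{ffl2}, together with the vanishing of $s_j-s_{\eta(j)}$ along the intersection divisors, is the paper's homomorphism \eqref{re4} and the computation \eqref{re6}. The only cosmetic differences are that you keep $s_1$ in the first slot (making $\Phi$ an automorphism and handling injectivity by iterating $\eta(j)<j$ down to $1$) where the paper instead uses the splitting $\mathcal{O}_Y^{\oplus d}=\mathcal{F}\oplus\xi(\mathcal{O}_Y)$, and that you spell out the reducedness of $Y\times_X Y$ which the paper leaves implicit.
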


\begin{proof}
The proof is rather identical to the proof of \cite[p.~12837, Lemma 3.5]{BP}. The details are given for the
benefit of the reader.
As in \eqref{ega}, the Galois group $\text{Gal}(f)$ is denoted by $\Gamma$.
Let $\nu\, :\, \widetilde{Y\times_X Y}\, \longrightarrow\, Y\times_X Y$ be the normalization of
$Y\times_X Y$. For $i\,=\, 1,\, 2$, let
\begin{equation}\label{epi}
\widetilde{\pi}_i \ :=\ \pi_i \circ \nu
\ :\ \widetilde{Y\times_X Y}\ \longrightarrow\ Y
\end{equation}
be the composition of maps, where $\pi_i\, :\, Y\times_X Y\, \longrightarrow\, Y$ is the projection
to the $i$--th factor. We have an isomorphism
\begin{equation}\label{epi-1}
Y\times\Gamma\ \longrightarrow\ \widetilde{Y\times_X Y}, \ \ \, (y,\, \gamma)\ \longmapsto\ (y,\, \gamma(y)).
\end{equation}
So, for the map $\widetilde{\pi}_1$ in \eqref{epi},
\begin{equation}\label{ep2}
(\widetilde{\pi}_1)_* {\mathcal O}_{\widetilde{Y\times_X Y}}\ =\ {\mathcal O}_Y\otimes_k k[\Gamma].
\end{equation}

Using adjunction, the identity map $\pi^*_1 {\mathcal O}_Y\,=\, {\mathcal O}_{Y\times_X Y}
\, \xrightarrow{\,\,\, {\rm Id}_{{\mathcal O}_{Y\times_X Y}}\,\,\,\,} \, {\mathcal O}_{Y\times_X Y}$
produces a homomorphism
$$
\zeta\ :\ {\mathcal O}_Y \ \longrightarrow\ (\pi_1)_* {\mathcal O}_{Y\times_X Y}
$$
On the other hand, since $\pi_1\circ \nu\,=\, \widetilde{\pi}_1$ (see \eqref{epi}), and $\nu$
is surjective, we have an
injective homomorphism
\begin{equation}\label{vp}
\varphi\ :\ (\pi_1)_* {\mathcal O}_{Y\times_X Y}\ \longrightarrow\ (\widetilde{\pi}_1)_*
{\mathcal O}_{\widetilde{Y\times_X Y}}.
\end{equation}
Let
\begin{equation}\label{exi}
\xi\ :=\ \varphi \circ\zeta\ :\ {\mathcal O}_Y \ \longrightarrow\ (\widetilde{\pi}_1)_*
{\mathcal O}_{\widetilde{Y\times_X Y}} \ =\ {\mathcal O}_Y\otimes_k k[\Gamma]
\end{equation}
be the composition of homomorphisms (see \eqref{ep2}).

The ordering, in Lemma \ref{ffl2}, of the elements of $\Gamma$ produces an isomorphism of $k[\Gamma]$ with $k^{\oplus d}$.
Consequently, from \eqref{ep2} we have
\begin{equation}\label{z1}
(\widetilde{\pi}_{1})_* {\mathcal O}_{\widetilde{Y\times_X Y}}\ =\
{\mathcal O}_Y\otimes_k k[\Gamma]\,=\, {\mathcal O}^{\oplus d}_Y\, .
\end{equation}

Let
$$
\Phi\, :\, (\widetilde{\pi}_{1})_* {\mathcal O}_{\widetilde{Y\times_X Y}}\,=\, {\mathcal O}^{\oplus d}_Y
\, \longrightarrow\, {\mathcal O}^{\oplus d}_Y \,=\,
(\widetilde{\pi}_{1})_* {\mathcal O}_{\widetilde{Y\times_X Y}}
$$
be the homomorphism defined by
\begin{equation}\label{re4}
(f_1,\, f_2,\, \cdots, \, f_d)\, \longmapsto\,
(f_1-f_{\eta(1)},\, f_2-f_{\eta(2)},\, \cdots, \, f_d-f_{\eta(d)})\, ,
\end{equation}
where $\eta$ is the map in Lemma \ref{ffl2}; in other words, the $j$-th
component of $\Phi(f_1,\, f_2,\, \cdots, \, f_d)$ is $f_j-f_{\eta(j)}$. The image
$$
{\mathcal F}\, :=\, \Phi({\mathcal O}^{\oplus d}_Y)\, \subset\,
{\mathcal O}^{\oplus d}_Y \,=\,
(\widetilde{\pi}_{1})_* {\mathcal O}_{\widetilde{Y\times_X Y}}
$$
is a trivial subbundle of rank $d-1$; the first component of $\Phi(f_1,\, f_2,\, \cdots, \, f_d)$
vanishes identically, because $\eta(1)\,=\,1$. More precisely,
\begin{equation}\label{zf2}
{\mathcal F}\,=\, {\mathcal O}^{\oplus (d-1)}_Y\, \subset\, {\mathcal O}^{\oplus d}_Y
\,=\, (\widetilde{\pi}_{1})_* {\mathcal O}_{\widetilde{Y\times_X Y}},
\end{equation}
where ${\mathcal O}^{\oplus (d-1)}_Y$ is the subbundle of ${\mathcal O}^{\oplus d}_Y$
generated by all $(f_1,\, f_2,\, \cdots, \, f_d)$ such that $f_1\,=\,0$.

{}From \eqref{zf2} it follows immediately that
\begin{equation}\label{cf2}
(\widetilde{\pi}_{1})_* {\mathcal O}_{\widetilde{Y\times_X Y}}\,=\,
{\mathcal O}^{\oplus d}_Y\,=\, {\mathcal F}\oplus \xi({\mathcal O}_Y)\, ,
\end{equation}
where $\xi({\mathcal O}_C)$ is the homomorphism in \eqref{exi}.

We have the commutative diagram
\begin{equation}\label{d1}
\xymatrix{
\widetilde{Y\times_X Y} \ar@/_/[ddr]_-{\widetilde{\pi}_1} \ar[dr]^-\nu \ar@/^/[drrr]^-{\widetilde{\pi}_2} & & & \\
& Y\times_X Y \ar[rr]^-{\pi_2} \ar[d]^-{\pi_1} && Y \ar[d]^-f \\
& Y \ar[rr]^-f && X
}
\end{equation}
By flat base change \cite[p.~255, Proposition 9.3]{Ha},
\begin{equation}\label{f1}
f^* (f_* {\mathcal O}_Y)\ \cong\ ({\pi_1})_* (\pi^*_2 {\mathcal O}_Y) \ =\ ({\pi_1})_* {\mathcal O}_{Y\times_X Y}.
\end{equation}

{}From \eqref{cf2} and \eqref{f1} we get an injective homomorphism
of coherent sheaves
\begin{equation}\label{cf3}
\Psi\ :\ f^* ((f_* {\mathcal O}_Y) / {\mathcal O}_X)\ \longrightarrow \ {\mathcal F}\ =\
{\mathcal O}^{\oplus (d-1)}_Y.
\end{equation}
Note that $\Psi$ is an isomorphism over the open subset
of $C$ where the map $f$ is a submersion.

Consider the map $\eta$ in Lemma \ref{ffl2}. For every $1\,\leq\, i\, \leq\, d-1$, define
\begin{equation}\label{zi-1}
D_i\ := \ Y_{\gamma_{i+1}}\bigcap Y_{\gamma_{\eta(i+1)}}
\end{equation}
(see \eqref{ecs}); from the fourth property in Lemma \ref{ffl2} and Remark \ref{remi} it follows
that $D_i$ is a nonzero effective divisor on $Y$. So
\begin{equation}\label{zi}
D^0_i \ :=\ \{y\, \in\, Y\, \,\big\vert\,\, (y,\, \gamma_{i+1}(y))\, \in\, D_i\} \ \subset\ Y
\end{equation}
is a nonzero effective divisor on $Y$. Let
$$
{\mathcal L}_i\ :=\ {\mathcal O}_Y(-D^0_i) \ \subset\ {\mathcal O}_Y 
$$
be the lie bundle on $Y$ given by the divisor $-D^0_i$.

For every $1\,\leq\, i\, \leq\, d-1$, let
\begin{equation}\label{pj}
P_i\, :\, {\mathcal O}^{\oplus (d-1)}_Y\, \longrightarrow\, {\mathcal O}_Y
\end{equation}
be the natural projection to the $i$-th factor.
Consider the composition of homomorphisms $P_i\circ\Psi$, where $P_i$ and
$\Psi$ are constructed in \eqref{pj} and \eqref{cf3} respectively. We will show that
$P_i\circ\Psi$ vanishes when restricted to
$D^0_i$ in \eqref{zi}. To see this, for any $1\,\leq\, j\, \leq\, d$, let
$$
\widehat{P}_j\, :\, {\mathcal O}^{\oplus d}_Y\, \longrightarrow\, {\mathcal O}_Y
$$
be the natural projection to the $j$-th factor. Recall the homomorphism $\Phi$ constructed
in \eqref{re4}. If $(f_1,\, f_2,\, \cdots, \, f_d)$ in \eqref{re4} actually lies in the image
of $({\pi_1})_* {\mathcal O}_{Y\times_X Y}$ by the inclusion map $\varphi$ in \eqref{vp}, then from
\eqref{zi-1} we have
\begin{equation}\label{re6}
(\widehat{P}_{i+1}\circ \Phi)(f_1,\, f_2,\, \cdots, \, f_d)(y,\, \gamma_{i+1}) \ =\
f_{i+1}(y,\, \gamma_{i+1})-f_{\eta(i+1)}(y,\, \gamma_{\eta(i+1)})\ =\ 0,
\end{equation}
where $y\, \in\, D^0_i$ (see \eqref{zi}), and also
$$
(\widehat{P}_{i+1}\circ \Phi)(f_1,\, f_2,\, \cdots, \, f_d)(y,\, \gamma_{\eta(i+1)}) \ =\
f_{i+1}(y,\, \gamma_{i+1})-f_{\eta(i+1)}(y,\, \gamma_{\eta\circ\eta(i+1)})\ =\ 0
$$
for $y\, \in\, D^0_i$. From \eqref{re6} it follows that
$P_i\circ\Psi$ vanishes when restricted to $D^0_i$, where $\Psi$ and $P_i$ are constructed in
\eqref{cf3} and \eqref{pj} respectively.

Since $P_i\circ\Psi$ vanishes when restricted to the divisor $D^0_i$, we have
\begin{equation}\label{cf4}
P_i\circ\Psi(f^* ((f_* {\mathcal O}_Y) / {\mathcal O}_X))\ \subset\ {\mathcal L}_i\,=\, {\mathcal O}_Y(-D^0_i)
\ \subset\ {\mathcal O}_Y .
\end{equation}
{}From \eqref{cf3} and \eqref{cf4} it follows immediately that
$$
f^* ((f_* {\mathcal O}_Y)/{\mathcal O}_X)\ \hookrightarrow\ \bigoplus_{i=1}^{d-1} {\mathcal L}_i .
$$
This completes the proof of the proposition.
\end{proof}

\begin{lemma}\label{ffl4}
Let $f\,:\,Y\,\longrightarrow\, X$ be a genuinely ramified map between irreducible smooth projective
varieties. For any semistable vector bundle $V$ on $X$,
$$
\mu_{\rm max}(V\otimes ((f_* {\mathcal O}_Y)/{\mathcal O}_X))\ <\ \mu(V).
$$
\end{lemma}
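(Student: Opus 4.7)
The plan is to pull back to $Y$, reduce to the case where $f$ is Galois so that Lemma \ref{ffl3} applies, and then derive the slope inequality by combining the embedding into negative-degree line bundles with preservation of semistability under finite generically smooth pullback.

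First I would reduce to the Galois case. Let $\tilde f\,:\,\tilde Y\,\longrightarrow\, X$ be a suitable (genuinely ramified) Galois cover of $X$ factoring through $Y$, for instance a Galois closure of $f$, possibly adjusted to remove its étale part over $X$. The inclusions $\cO_X\,\hookrightarrow\, f_*\cO_Y\,\hookrightarrow\, \tilde f_*\cO_{\tilde Y}$ induce an injection $(f_*\cO_Y)/\cO_X\,\hookrightarrow\, (\tilde f_*\cO_{\tilde Y})/\cO_X$, preserved upon tensoring with the locally free $V$. So it suffices to prove the estimate for $\tilde f$.

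With $f$ now Galois of degree $d$, Lemma \ref{ffl3} yields line bundles $\scrL_j\,\subsetneq\,\cO_Y$ for $1\leq j\leq d-1$ with
\[
f^*\bigl((f_*\cO_Y)/\cO_X\bigr)\,\hookrightarrow\, \bigoplus_{j=1}^{d-1}\scrL_j.
\]
Each $\scrL_j\,=\,\cO_Y(-D_j^0)$ for a nonzero effective divisor $D_j^0$, hence $\deg\scrL_j\,<\,0$ with respect to the polarization $f^*\cO_X(1)$. Let $W\,\subseteq\, V\otimes (f_*\cO_Y)/\cO_X$ be a nonzero torsion-free subsheaf. Applying $f^*$ (exact, since $f$ is flat between smooth varieties of equal dimension) and tensoring with $f^*V$ produces
\[
f^*W\,\hookrightarrow\, f^*V\otimes f^*\bigl((f_*\cO_Y)/\cO_X\bigr)\,\hookrightarrow\, \bigoplus_{j=1}^{d-1}\bigl(f^*V\otimes \scrL_j\bigr).
\]
Since $V$ is semistable on $X$, the pullback $f^*V$ is semistable on $Y$ (the fact invoked in the proof of Proposition \ref{pullback-morphism}). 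Tensoring with a line bundle preserves semistability and shifts slope by its degree, so each summand $f^*V\otimes \scrL_j$ is semistable of slope $d\mu(V)+\deg\scrL_j\,<\,d\mu(V)$. Hence $\mu_{\rm max}$ of the direct sum is strictly less than $d\mu(V)$. Combined with the scaling $\mu(f^*W)\,=\,d\,\mu(W)$ under finite pullback (using $f^*\cO_X(1)$ as polarization on $Y$), we get
\[
d\,\mu(W)\,=\,\mu(f^*W)\,\leq\, \mu_{\rm max}\Bigl(\bigoplus_{j}(f^*V\otimes \scrL_j)\Bigr)\,<\,d\mu(V),
\]
so $\mu(W)\,<\,\mu(V)$. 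Since $W$ was arbitrary, $\mu_{\rm max}(V\otimes (f_*\cO_Y)/\cO_X)\,<\,\mu(V)$.

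The principal technical obstacle is the reduction to the Galois case. In higher dimensions the Galois closure of a genuinely ramified map need not itself be genuinely ramified (a nontrivial étale sub-cover can appear), nor need it be smooth---it is only normal---whereas Lemma \ref{ffl3} and its proof via purity of the branch locus are formulated on a smooth $Y$. The first issue can be handled by replacing the Galois closure by an intermediate Galois cover whose maximal étale sub-cover over $X$ is trivial; the second by passing to a desingularization, or equivalently by generalizing Lemmas \ref{ffl2}--\ref{ffl3} directly to non-Galois $f$ through the irreducible-component decomposition of $Y\times_X Y$ (using its connectedness from \cite{BDP}). This is where the essential technical work of the proof lies.
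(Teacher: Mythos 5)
Your Galois-case computation is exactly the paper's core argument: it is Lemma \ref{ffl3} combined with the negativity of $\deg\scrL_j$, semistability of $f^*V$ under finite separable pullback, and the scaling of slopes, which is precisely the content of the proof of \cite[Lemma 4.1]{BP} that the paper invokes. The genuine gap is the reduction from a general genuinely ramified $f$ to the Galois case, which you defer to the final paragraph and do not actually carry out; moreover, the one concrete repair you propose does not work. If the Galois closure $\widetilde{Y}\longrightarrow X$ of $f$ factors through a nontrivial connected \'etale cover $X_1\longrightarrow X$ (this can happen even for curves, e.g.\ a non-Galois degree~$3$ cover with full monodromy $S_3$ all of whose inertia elements are $3$-cycles, so that $\widetilde{Y}/A_3\longrightarrow X$ is \'etale while $f$ itself is genuinely ramified), then \emph{every} Galois cover of $X$ that dominates $Y$ also dominates $\widetilde{Y}$ and hence factors through the same \'etale cover; so there is no ``intermediate Galois cover of $X$ with trivial \'etale part'' through which $f$ factors, and your chain $\cO_X\hookrightarrow f_*\cO_Y\hookrightarrow \widetilde{f}_*\cO_{\widetilde Y}$ cannot be arranged with $\widetilde f$ both Galois and genuinely ramified. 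Your alternative suggestion (redo Lemmas \ref{ffl2}--\ref{ffl3} for non-Galois $f$) is also only a gesture: the proof of Lemma \ref{ffl3} uses the identification $(\widetilde\pi_1)_*\cO_{\widetilde{Y\times_XY}}\cong\cO_Y\otimes_k k[\Gamma]$, which is special to the Galois situation.

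The reduction that does work (and is, in effect, what the cited proof of \cite[Lemma 4.1]{BP} relies on) changes the base rather than the cover: let $X_1\longrightarrow X$ be the maximal \'etale subcover of the Galois closure, i.e.\ $X_1=\widetilde Y/\Gamma_1$ with $\Gamma_1\subset\Gamma$ generated by the inertia subgroups. Then $\widetilde Y\longrightarrow X_1$ is Galois \emph{and} genuinely ramified; genuine ramification of $f$ (not of its closure) guarantees that $Y_1:=Y\times_XX_1$ is connected, that $Y_1\longrightarrow X_1$ is genuinely ramified, and that $\widetilde Y\longrightarrow X_1$ factors through $Y_1$, so $((f_1)_*\cO_{Y_1})/\cO_{X_1}$ embeds in $(q_*\cO_{\widetilde Y})/\cO_{X_1}$ with $q$ Galois genuinely ramified; one then applies your Galois-case argument over $X_1$ to $p^*V$ and descends the inequality along the finite \'etale $p:X_1\longrightarrow X$, since $p^*$ preserves semistability and multiplies slopes by $\deg p$. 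You are right to flag, in addition, that in dimension $>1$ the Galois closure is only normal, so applying Lemma \ref{ffl3} to it needs extra care (the paper leaves this inside its citation of \cite{BP}, which treats curves); but as written your proposal neither executes the reduction nor offers a viable substitute for it, so the proof is incomplete at its acknowledged crux.
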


\begin{proof}
In view of Lemma \ref{ffl3}, the proof is exactly identical to the proof of \cite[p.~12840, Lemma 4.1]{BP}.
Note that in Lemma \ref{ffl3} we have
$$
\text{degree}({\mathcal L}_j)\ <\ 0
$$
for all $1\, \leq\, j\, \leq\, d-1$, because ${\mathcal L}_j\ \subsetneq \ {\mathcal O}_Y$ and
${\mathcal L}_j$ is locally free. More, precisely, $-\text{degree}({\mathcal L}_j)$ coincides with the
degree of the divisor whose ideal sheaf is ${\mathcal L}_j$.
\end{proof}
 
\begin{proof}[{Proof of Proposition \ref{fullyfaithful}}]
The proof is very similar to the proof of \cite[p.~12844, Lemma 4.3]{BP}.
The details are given for the benefit of the reader.

Using the projection formula, and the fact that $f$ is a finite map, we have
$$
H^0(Y,\, {\rm Hom}(f^*V,\, f^*W)) \ \cong\ H^0(X,\, f_*{\rm Hom}(f^*V,\, f^*W))
\ \cong\ H^0(X,\, f_*f^*{\rm Hom}(V,\, W))
$$
\begin{equation}\label{j1}
\cong\ H^0(X,\, {\rm Hom}(V,\, W)\otimes f_*{\mathcal O}_Y)
\,\cong\,H^0(X,\, {\rm Hom}(V,\, W\otimes f_*{\mathcal O}_Y)).
\end{equation}
Let
$$
0\,=\, B_0\, \subset\, B_1\, \subset\, \cdots \, \subset\, B_{m-1}\, \subset\, B_m\,=\,
W\otimes ((f_*{\mathcal O}_Y)/{\mathcal O}_X)
$$
be the Harder--Narasimhan filtration of $W\otimes ((f_*{\mathcal O}_Y)/{\mathcal O}_X)$.
Since $W$ is semistable, and $f$ is genuinely ramified, from
Lemma \ref{ffl4} we know that 
$$
\mu(B_i/B_{i-1})\ \leq\ \mu(B_1)\ =\ \mu_{\rm max}(W\otimes ((f_*{\mathcal O}_Y)/{\mathcal O}_X))
\ <\ \mu(W)\ =\ \mu(V)
$$
for all $1\, \leq\, i\, \leq\, m$. Since both $V$ and $B_i/B_{i-1}$ are semistable, and
$\mu(B_i/B_{i-1})\ \leq\ \mu(V)$, we have
$$
H^0(X,\, {\rm Hom}(V,\, B_i/B_{i-1}))\,=\, 0
$$
for all $1\, \leq\, i\, \leq\, m$. This implies that
\begin{equation}\label{h1}
H^0(X,\, {\rm Hom}(V,\, W\otimes ((f_*{\mathcal O}_Y)/{\mathcal O}_X)))\,=\ 0.
\end{equation}

Now consider the short exact sequence of sheaves
$$
0 \,\longrightarrow\,{\rm Hom}(V,\, W)\,\longrightarrow\, {\rm Hom}(V,\, W\otimes f_*{\mathcal O}_Y)
\,\longrightarrow\, {\rm Hom}(V,\, W\otimes ((f_*{\mathcal O}_Y)/{\mathcal O}_X))\,\longrightarrow\, 0,
$$
and the corresponding exact sequence of cohomologies
\begin{equation}\label{h2}
0\ \longrightarrow\ H^0(X,\, {\rm Hom}(V,\, W))\ \longrightarrow\ H^0(X,\, {\rm Hom}(V,\, W\otimes f_*{\mathcal
O}_Y))
\end{equation}
$$
\ \longrightarrow\ H^0({\rm Hom}(V,\, W\otimes ((f_*{\mathcal O}_Y)/{\mathcal O}_X))).
$$
Combining \eqref{h1} and \eqref{h2} it follows that
$$
H^0(X,\, {\rm Hom}(V,\, W))\ = \ H^0(X,\, {\rm Hom}(V,\, W\otimes f_*{\mathcal
O}_Y)).
$$
{}From this and \eqref{j1} it follows that
$$
H^0(X,\, {\rm Hom}(V,\,W))\ =\ H^0(Y,\, {\rm Hom}(f^*V,\, f^*W)).$$
This completes the proof.

\end{proof}

\section*{Acknowledgements}

We thank the referee for helpful comments. The first author is partially supported by a J. C.
Bose Fellowship (JBR/2023/000003). The second author is partially supported by a SERB Core grant 
(CRG/2023/006248).


\begin{thebibliography}{ZZZZ}

\bibitem[BO]{BO} P. Berthelot and A. Ogus, {\it Notes on crystalline cohomology},
Princeton University Press, Princeton NJ, University of Tokyo Press, Tokyo, 1978.

\bibitem[BP]{BP} I. Biswas and A. J. Parameswaran, Ramified covering maps and stability of
pulled back bundles, {\it Int. Math. Res. Not.}, (2022), no. 17, 12821--12851.

\bibitem[BDP]{BDP} I. Biswas, S. Das, and A. J. Parameswaran, Genuinely ramified maps and stable vector
bundles, {\it Internat. J. Math.} {\bf 33} (2022), Paper No. 2250039, 14.

\bibitem[DM]{DM} P. Deligne and J. S. Milne, Tannakian Categories, {\it Hodge cycles, motives, and Shimura
varieties}, by P. Deligne, J. S. Milne, A. Ogus and K.-Y. Shih, pp. 101--228, Lecture Notes in Mathematics, 900,
Springer-Verlag, Berlin-Heidelberg-New York, 1982.

\bibitem[EM]{EM} H. Esnault and V. Mehta, Simply connected projective manifolds in characteristic $p>0$
have no nontrivial stratified bundles, {\it Invent. Math.} {\bf 181} (2010), 449--465.

\bibitem[ES]{ES} H. Esnault and V. Srinivas, A relative version of Gieseker's problem on stratifications
in characteristic $p>0$, {\it Inter. Math. Res. Not.} (2019), 5635--5648.

\bibitem[Gi]{Gi} D. Gieseker, Flat vector bundles and the fundamental group in non-zero characteristics,
{\it Ann. Scuola Norm. Sup. Pisa Cl. Sci.} {\bf 2} (1975), 1--31.

\bibitem[Gr]{Gr} A. Grothendieck, \'El\'ements de g\'eom\'etrie alg\'ebrique. IV. \'Etude locale des
sch\'emas et des morphismes de sch\'emas IV, {\it Inst. Hautes \'Etudes Sci. Publ. Math.} {\bf 32} (1967),
5--361.

\bibitem[Ha]{Ha} R. Hartshorne, {\it Algebraic geometry}, Graduate Texts in Mathematics, No. 52. Springer-Verlag,
New York-Heidelberg, 1977.

\bibitem[Hr]{Hru} E. Hrushovski, The elementary theory of the Frobenius automorphism, preprint, available at 
http://www.ma.huji.ac.il/$\sim$ehud/FROB.pdf.

\bibitem[LS]{LS} H. Lange and U. Stuhler, Vektorb\"undel auf Kurven und Darstellungen der algebraischen
Fundamentalgruppe, {\it Math. Zeit.} {\bf 156} (1977), 73--83.

\bibitem[Mur]{Mur} J. P. Murre, {\it Lectures on an introduction to {G}rothendieck's theory of the
 fundamental group,} Notes by S. Anantharaman, Tata Institute of Fundamental Research Lectures on Mathmatics,
 No 40, 1967.

\bibitem[Ni]{nitsure} N. Nitsure, Schematic Harder-Narasimhan stratification, {\it
Internat. J. Math.} {\bf 22} (2011), 1365--1373.

\bibitem[No1]{No1}M. V. Nori, On the representations of the fundamental group,
{\it Compositio Math.} {\bf 33} (1976), 29--41.

\bibitem[No2]{No2} M. V. Nori, The fundamental group scheme, {\it Proc. Indian Acad. Sci. Math. Sci.} {\bf 91}
(1982), 73--122.

\bibitem[SGA1]{SGA} A. Grothendieck, {\it Rev\^etements \'etales et groupe fondamental}, {\it SGA 1}. 

\bibitem[Sa]{Sa} J. P. P. dos Santos, Fundamental group schemes for stratified sheaves,
{\it J. Algebra} {\bf 317} (2007), 691--713.

\bibitem[SR]{SR} N. Saavedra Rivano, {\it Cat\'egories Tannakiennes}, Lecture Notes in Mathematics, Vol. 265,
Springer-Verlag, Berlin, Heidelberg, 1972.

\bibitem[Sh]{shatz} S. Shatz, The decomposition and specialization of algebraic families of vector bundles,
{\it Compositio Math.} {\bf 35} (1977), 163--187.

\bibitem[Sim]{Simpson} C.T. Simpson, Moduli of representations of the fundamental groups of a smooth projective 
variety I, {\it Inst. Hautes \'Etudes Sci. Publ. Math.} {\bf 79} (1994), 47--129.

\bibitem[SP]{SP} {\it Stacks Project}, http://stacks.math.columbia.edu/.

\bibitem[Sun]{Sun} X. Sun, {Stratified bundles and representation spaces}, {\it Adv. Math.} {\bf 345} (2019), 767--783.

\bibitem[Var]{Var} Y. Varshavsky, Intersection of a correspondence with a graph of Frobenius,
{\it J. Algebraic Geom.} {\bf 27} (2018), 1--20.
\end{thebibliography}
\end{document}